\documentclass{article}
\usepackage{epstopdf}

\usepackage{geometry}
\usepackage{subcaption}
\usepackage{mathrsfs}
\usepackage{amssymb,newlfont,amsmath,amsthm}
\usepackage{tikz, tikz-3dplot}
\usepackage{hyperref}
\usepackage{url}
\usepackage{longtable}
\usepackage{esint}
\usepackage{xcolor}
\usepackage{tikz, pgf}
\usepackage{stmaryrd}
\usepackage{wasysym}
\SetSymbolFont{stmry}{bold}{U}{stmry}{m}{n} 
\usepackage{bm}
\usepackage{parskip}
\usetikzlibrary{arrows}
\usetikzlibrary{decorations}
\usetikzlibrary{decorations.pathmorphing}
\usetikzlibrary{calc}
\usepackage{multirow}
\usepackage{booktabs}
\usepackage[protrusion=true,expansion=true]{microtype}
\usepackage{graphicx}

\newtheorem{theorem}{Theorem}[section] 
\newtheorem{corollary}[theorem]{Corollary} 

\newtheorem{lemma}[theorem]{Lemma}

\newtheorem{remark}[theorem]{Remark}

\title{A residual-based finite element surrogate solver \\ for elliptic partial differential equations}

\author{Kyoungjin Jung,\thanks{Department of Mathematics, Ajou University, Suwon, Republic of Korea. Email: \tt{fned49@ajou.ac.kr}}
~Jae Yong Lee\thanks{Department of AI, Chung-Ang University, Seoul, Republic of Korea. Email: \tt{jaeyong@cau.ac.kr}}
~and ~Dongwook Shin\thanks{Department of Mathematics, Ajou University, Suwon, Republic of Korea. Email: \tt{dws@ajou.ac.kr}}}

\date{}

\begin{document}

\maketitle

\begin{abstract}
    We propose a residual-based finite element surrogate solver for elliptic partial differential equations.
    The method combines convolutional neural networks with classical finite element discretization in a data-free setting, where the loss function is defined directly from the finite element residual.
    This enables the approximation of the solution operator without requiring paired input-output data.
    A key feature of the proposed approach is that it can be analyzed using standard finite element theory under mesh refinement.
    We establish a relationship between the training loss and the error in the $H^1$-seminorm, and derive training criteria that ensure optimal convergence rates.
    To improve efficiency, we introduce a decomposition strategy that separates the contributions of different input components.
    This allows the model to learn simpler sub-operators.
    Numerical experiments demonstrate that the proposed method achieves stable convergence under grid refinement, remains robust on complex geometries and oscillatory solutions, and extends naturally to nonlinear equations.
\end{abstract}

\noindent
\textbf{Keywords} \quad Finite element method, Residual-based learning, Surrogate modeling, Elliptic partial differential equations, Convergence analysis

\noindent
\textbf{Mathematics Subject Classification}\quad 65N06, 65N30, 65N15, 68T07

\section{Introduction}

Solving partial differential equations (PDEs) is a fundamental task in computational science, with applications ranging from physics and engineering to climate modeling and biology. Classical numerical solvers, such as the finite difference method (FDM) and the finite element method (FEM), provide accurate solutions but often require significant computational cost when solving parameterized families of PDEs across multiple input configurations. This limitation becomes particularly severe on fine grids, where repeated solutions of large linear systems or the storage of dense inverse operators are prohibitive.

Operator learning~\cite{LU2022114778,JMLR:v24:21-1524} has gained attention as an alternative approach that aims to directly learn mappings from input functions, such as source terms and boundary data, to PDE solutions. Despite this progress, existing neural operators exhibit several critical limitations. First, most operators are trained in a supervised setting and rely on large datasets of input–output pairs, which are expensive to generate using classical solvers~\cite{kovachki2024data}. This dependence severely limits scalability, especially in applications where ground-truth solutions are unavailable or prohibitively expensive to compute. Second, generalization to diverse boundary conditions remains a significant challenge~\cite{wang2024beno}. Many existing models are designed under fixed or simplified boundary types—such as periodic or purely Dirichlet boundary conditions—and often fail to adapt to new configurations, particularly when Neumann or mixed boundary conditions are introduced. Third, current approaches typically treat PDEs as black-box mappings without exploiting known mathematical structures such as linearity or decomposability. As a result, models may overfit specific input patterns and lack the interpretability and modularity needed for more complex scientific applications. 

Recently, convolution-based neural operator architectures have been proposed to improve the scalability and expressivity of operator learning models~\cite{MR3800689,MR3957452}. 
In particular, the convolutional neural operator~\cite{raonic2023convolutional} introduces structure-preserving modifications to Convolutional Neural Networks (CNNs) to enforce continuous-discrete equivalence and reduce aliasing errors.
This approach provides a principled operator-level adaptation of U-Net architectures~\cite{ronneberger2015u}. 
This line of work addresses important representation-theoretic challenges in operator learning, especially those related to discretization dependence and aliasing effects in convolutional architectures.
However, these advances primarily focus on how operators are represented and learned across resolutions, rather than on the incorporation of physical laws and numerical discretizations into the training process.
In particular, the governing PDE structure typically enters such models only implicitly, and the training procedure remains largely decoupled from classical numerical schemes.
As a result, these models do not explicitly exploit the consistency, locality, and stability properties inherent in numerical methods.
In many scientific and engineering applications, the governing equations are explicitly known, while the paired solution data may be expensive or impractical to obtain.
This observation motivates operator learning frameworks that directly integrate numerical discretizations and physics-based constraints into the learning process.
Such frameworks enable training driven by the PDE itself rather than by precomputed solution pairs.

These considerations motivate the development of operator learning frameworks that explicitly integrate physical laws and numerical discretizations into the learning process.
Our approach is closely related to Finite Element Operator Network (FEONet)~\cite{MR4888707}, which learns solution operators without paired input-output data through a finite element residual formulation.
We extend this framework to a convolutional surrogate solver by introducing an image-based finite element representation.
Instead of reshaping a finite element input vector into an image, the proposed method embeds the computational domain into a fixed image grid and constructs the finite element mesh from the corresponding nodes.
This design preserves the spatial structure of the physical domain and facilitates the treatment of complex geometries within a convolutional architecture.
We refer to the resulting method as the finite element convolutional operator network (FE-CON).
For completeness, we also present a finite difference formulation (FD-CON) in Appendix~C, which follows the same residual-based construction.
The main contributions of this paper are as follows:
\begin{itemize}
    \item \textbf{Image-based finite element convolutional operator framework.} We extend the FEONet framework to a residual-based convolutional surrogate solver by introducing an image-based finite element representation. The proposed formulation directly incorporates finite element discretization into the learning objective. This formulation enables operator learning without paired solution data and establishes a direct connection between neural approximation and finite element analysis.
    \item \textbf{Error analysis and training criterion for optimal convergence.} We derive error estimates in the $H^1$-seminorm and establish a relationship between the decay of the training loss and the approximation error. Based on this result, we propose a training criterion that guarantees optimal convergence rates under mesh refinement.
    \item \textbf{Decomposition strategy for efficient operator learning.} We introduce a decomposition strategy that splits the original problem into simpler subproblems. This approach improves training efficiency and generalization, particularly when only a limited number of training samples are available.
    \item \textbf{Learning mixed boundary conditions with fully varying data.} We consider the Poisson equation with mixed (Dirichlet--Neumann) boundary conditions, where the source term and both boundary conditions vary independently.
    This setting differs from prior works, which typically vary only one component while keeping the others fixed.
    \item \textbf{Robust performance on fine grids and complex settings.} We demonstrate that the proposed method trains reliably on fine grids and produces accurate solutions under grid refinement. Additional experiments show the relation between generalization error and training sample size, and illustrate the performance on complex settings, such as complex geometries, the Helmholtz equation, and a nonlinear equation.
\end{itemize}

The remainder of this paper is organized as follows.
The next section provides a brief introduction to classical numerical schemes and related work. 
Section~\ref{sec:methodology} describes the proposed method and Section~\ref{sec:analysis} presents the error analysis.
Section~\ref{sec:numerical_exp} provides experimental results demonstrating the accuracy, training efficiency, and scalability of our approach. 
We conclude in Section~\ref{sec:conclusion} with a discussion of future directions.

\section{Preliminaries}\label{sec:preliminaries}
In this section, we review the related work and background necessary for this work.
We first describe the model problem and its finite element discretization.
Next, we present an operator perspective of the resulting discrete system.
Finally, we review various machine-learning-based approaches for learning PDE operators.

\subsection{Model problem and finite element discretization} \label{sec:FEM}
Let us consider the following Poisson equation with mixed boundary conditions:
\begin{subequations}\label{Poisson_problem}
\begin{align}
	- \Delta u &= f \ \ \quad \textrm{in} \ \ \Omega \label{Poisson_f}, \\ 
    u &= g_D \quad \textrm{on} \ \partial\Omega_D \label{Poisson_Diri}, \\
	\nabla u \cdot \boldsymbol{n} &= g_N \quad \textrm{on} \ \partial\Omega_N, \label{Poisson_Neu}
\end{align}
\end{subequations}
where $\Omega \subset \mathbb{R}^d \ (d = 1,\ 2,\ 3)$ is a bounded Lipschitz domain with polygonal boundary $\partial\Omega$, $\partial\Omega_D$ is the Dirichlet boundary, and $\partial\Omega_N$ is the Neumann boundary.
Here, $\partial\Omega_D$ is a closed subset of $\partial\Omega$ with positive measure and $\partial\Omega_N = \partial\Omega \setminus\partial\Omega_D$.
For simplicity, we focus on the two-dimensional case ($d = 2$) throughout the remainder of this paper.

We summarize the finite element formulation for the problem \eqref{Poisson_problem}.
Let $u_D$ be an extension of $g_D$ to the entire domain such that $u_D = g_D$ on $\partial\Omega_D$.
For $f \in L^2(\Omega)$, $u_D \in H^1(\Omega)$, and $g_N \in L^2(\partial\Omega_N)$, there exists a weak solution $u\in H^1(\Omega)$ by the Lax–Milgram lemma.
In order to deal with the inhomogeneous Dirichlet conditions \eqref{Poisson_Diri}, we introduce the decomposition $u = w + u_D$, where $w \in H^1_D(\Omega)$ and
$$H^1_D(\Omega) := \{ v \in H^1(\Omega) \mid v = 0 \text{ on } \partial\Omega_D \}.$$
The weak formulation reads: find $w \in H^1_D(\Omega)$ such that
\begin{equation}\label{weak_form}
\int_{\Omega} \nabla w \cdot \nabla v \, dx = \int_{\Omega} f v \, dx + \int_{\partial\Omega_N} g_N v \, ds - \int_{\Omega} \nabla u_D \cdot \nabla v \, dx,
\end{equation}
for all $v\in H^1_D(\Omega)$.
This decomposition will also play a key role in the formulation of the proposed method.

For the numerical approximation, we consider finite element spaces \( S \subset H^1(\Omega) \) and \( S_D \subset S \cap H^1_D(\Omega) \).
For a given \( U_D \in S \) which approximates \( u_D \) on \( \partial\Omega_D \), the discrete problem reads: Find \( W \in S_D \) such that
\begin{equation}
\int_{\Omega} \nabla W \cdot \nabla V \, dx = \int_{\Omega} f V
\, dx + \int_{\partial\Omega_N} g_N V \, ds - \int_{\Omega} \nabla U_D \cdot \nabla V \, dx, \label{eq:discrete}
\end{equation}
for all $V \in S_D$.
Let $\{\eta_i\}_{i=1}^N$ denote the node points on a given mesh of $\Omega$, and let $\{\psi_i\}_{i=1}^N$ be the corresponding nodal basis of $S$ such that $\psi_i(\eta_j) = \delta_{ij}$.
Let  $\{\psi_{i_k}\}_{k=1}^M$ denote the basis of $S_D$, where $I = \{i_1, ..., i_M\} \subseteq \{1,...,N\}$ is an index set of cardinality $M \le N-2$.
Then, the discrete problem \eqref{eq:discrete} is equivalent to the linear system
\begin{equation} \label{FEsystem}
	A\boldsymbol{w} = \boldsymbol b
\end{equation}
where the stiffness matrix $A\in\mathbb{R}^{M\times M}$ and the load vector $\boldsymbol{b}\in\mathbb{R}^M$ are defined by
\begin{subequations} \label{FEM_matrix_final}
\begin{align}
A_{\ell m} &= \int_{\Omega} \nabla \psi_\ell \cdot \nabla \psi_m \, dx, \label{matrix_A} \\
b_\ell &= \int_{\Omega} f \psi_\ell \, dx + \int_{\partial\Omega_N} g_N \psi_\ell \, ds - \sum_{k=1}^{N} U_k \int_{\Omega} \nabla \psi_\ell \cdot \nabla \psi_k \, dx. \label{FE_system_b}
\end{align}
\end{subequations}
The matrix $A$ is sparse, symmetric and positive definite, and therefore the system admits a unique solution $\boldsymbol{w} \in \mathbb{R}^M$. Then the finite element solution is given by
\begin{equation}\label{FEsolution_rep}
U = W + U_D = \sum_{k \in I} w_k \psi_k + \sum_{i=1}^{N} U_i \psi_i.
\end{equation}

The discrete problem can be interpreted as the application of the inverse operator $A^{-1}$ to the load vector $\boldsymbol{b}$.
Here, the vector $\boldsymbol{b}$ is determined by the PDE data, such as the source term, the boundary conditions, and the geometric information of the domain.
Thus, each set of PDE data defines a different load vector and a corresponding discrete solution.
In this work, we aim to approximate this mapping without explicitly solving the linear system for each input.

\subsection{Operator perspective of the discrete problem} \label{sec:operator_perspective}
The discrete problem introduced in Section~\ref{sec:FEM} defines a mapping from the input PDE data, such as the source term, the boundary conditions, and the geometric information of the domain, to the corresponding solution.
In the finite element setting, this mapping associates the load vector with the discrete solution obtained from the variational formulation.
From this perspective, the problem is to construct a solution operator that maps the given data to the corresponding solution.

A more general characterization of the solution is based on a residual formulation.
Let $r(u; f, g_D, g_N)$ denote the residual associated with a given problem.
Then, the solution is the function that satisfies the residual equation
$$r(u;f, g_D, g_N) = 0,$$
in a suitable weak sense.
For example, in the linear case considered in Section~\ref{sec:FEM}, the residual corresponds to the variational formulation, where the solution satisfies
$$r(w; f, g_D, g_N) (v) = \int_\Omega f v\ dx + \int_{\partial\Omega_N} g_N v \, ds - \int_{\Omega} \nabla u_D \cdot \nabla v \, dx - \int_\Omega \nabla w\cdot \nabla v\ dx = 0,$$
for all test functions $v\in H^1_D(\Omega)$.
In the discrete setting, this condition leads to the linear system $A\boldsymbol{w} = \boldsymbol{b}$.
Equivalently, if a discrete function $w_h$ has the coefficient vector $\boldsymbol{w}$, the discrete residual can be written as $\boldsymbol{r}(\boldsymbol{w}; A, \boldsymbol{b}) = \boldsymbol{b} - A\boldsymbol{w}$.
In this case, the mapping from the input data to the solution can be expressed through the inverse of the stiffness matrix.
The residual formulation also applies to nonlinear problems.
In nonlinear elliptic problems, the residual includes nonlinear terms in the solution.
The solution is then defined as the function that satisfies the nonlinear residual equation.
Classical numerical methods use iterative solvers, such as Newton-type methods.
These methods update the solution until the residual becomes sufficiently small.

In this work, we use the residual formulation to construct a surrogate solver.
We do not solve the linear or nonlinear system iteratively for each input.
Instead, we approximate the mapping from the input data to the solution by enforcing the residual equation.
This perspective provides a unified framework for both linear and nonlinear problems and serves as the foundation of the proposed method in Section~\ref{sec:methodology}.

\subsection{Related work}\label{sec:related_work}
Recent advances in physics-informed machine learning \cite{karniadakis2021physics} have led to a growing interest in solving PDEs using neural networks. Representative examples include physics-informed neural networks (PINNs)~\cite{MR3881695}, which embed physical laws into the loss function. 
Building on this idea, an increasing body of work has focused on learning mappings between infinite-dimensional function spaces—known as neural operators. 
Major neural operator architectures include the Deep Operator Network (DeepONet) \cite{lu2021learning, LU2022114778, lee2023hyperdeeponet} and the Fourier Neural Operator (FNO) \cite{li2021fourier}, together with extensions based on meta-learning~\cite{pmlr-v235-cho24b,cho2023hypernetworkbased}, graph neural networks~\cite{brandstetter2022message,boussif2022magnet,CHO2026114430}, and transformers~\cite{pmlr-v202-hao23c,wang2025cvit,cao2021choose}.

CNN-based models have also adopted U-Net architectures to enable robust and accurate operator learning in image-based domains, along with theoretical results on their approximation capacity~\cite{raonic2023convolutional}. In addition, hybrid approaches that integrate classical numerical solvers, such as FDM~\cite{fdm-nn}, with deep learning have been explored to improve data efficiency and utilize domain-specific inductive biases. Despite these successes, most neural operator frameworks are trained in a supervised manner and require large collections of paired input–output data, which are often generated via computationally expensive numerical solvers. This reliance on labeled data presents a critical bottleneck, particularly for large-scale or parametric PDEs ~\cite{de2022cost}.

To mitigate this limitation, data-free or physics-informed neural operators have recently gained attention~\cite{Goswami2023}. For example, Zhu et al.~\cite{MR3957452} proposed a CNN-based method that incorporates finite difference discretizations and learns operators directly from PDE residuals. The Physics-Informed DeepONet (PI-DeepONet)~\cite{wang2021learning} extended DeepONet to an unsupervised setting by minimizing residuals instead of relying on labeled data. Similarly, the Physics-Informed Neural Operator (PINO)~\cite{PINO} introduced a loss formulation that enables FNO training with limited or no supervision. 
Several studies have also investigated the treatment of boundary conditions in neural operator frameworks~\cite{wang2024beno,sukumar2022exact,MR4888707}. While progress has been made in handling Dirichlet or periodic boundary conditions through network architectures and loss design, a general and scalable approach that can handle mixed or variable boundary conditions across resolutions remains underexplored. Our work addresses this gap by proposing a decomposition-based formulation that naturally accommodates diverse boundary configurations while maintaining fully data-free training.

More recently, FEONet~\cite{MR4888707}, proposed by Lee et al., leveraged a finite element formulation to learn solution operators without labeled data, achieving strong performance across a variety of PDE benchmarks.
The proposed approach extends the FEONet framework.
In the CNN-based implementation of FEONet, the input vector is reshaped into an image to match the CNN architectures.
However, this reshaping may distort the spatial structure, particularly for non-rectangular or unstructured meshes, where neighboring pixels can correspond to distant physical locations.
In addition, the input dimension must match the number of image pixels, which imposes constraints for complex geometries.
In contrast, the proposed method maps the computational domain onto a fixed image grid and constructs the mesh from the generated nodes.
Since the finite element data are embedded directly into this image-based representation, the input is naturally defined on the same grid and its dimension matches the number of image pixels.
This design preserves spatial consistency and facilitates the application to complex geometries (see Figure~\ref{fig:domain_to_image}).

\begin{figure}
    \centering
    \includegraphics[height=4.9cm]{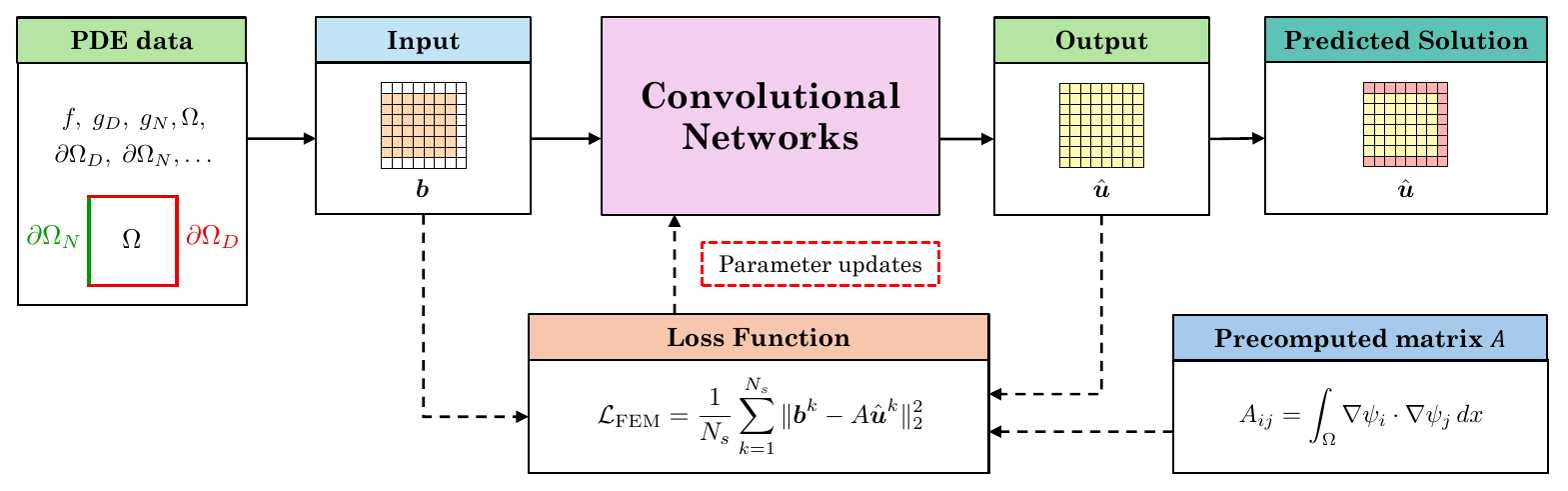}
    \caption{Overview of the proposed residual-based convolutional method.}
    \label{fig:framework}
\end{figure}

\section{Methodology}\label{sec:methodology}
In this section, we describe the proposed residual-based convolutional method.
The method constructs a surrogate solver that approximates the mapping from input data to the corresponding solution without explicitly solving the underlying linear or nonlinear system.
An overview of the proposed method is shown in Figure \ref{fig:framework}.

\subsection{Image-based representation and problem decomposition}\label{sec:decomposition}

We first represent the computational domain on a fixed image grid and construct the corresponding mesh from this representation.
For illustration, we consider a computational domain $\Omega \subset \mathbb{R}^2$ embedded in a square image domain, and we discretize the image domain into a uniform square grid of size $N \times N$. 
Figure~\ref{fig:domain_to_image} shows the case where $\Omega$ is a square domain with a square hole.
In classical FEM, each node corresponds to a mesh vertex. 
In contrast, we adopt an image-based representation.
Here, each node is represented as a pixel in an image, and the domain is embedded into this image by assigning values to the corresponding pixels.

More precisely, we first specify the size of the input image and then map the computational domain onto this image.
Then, blue nodes are placed at the pixels corresponding to the interior and boundary of the domain, and white nodes are placed at the pixels corresponding to regions outside the domain, as illustrated in Figure~\ref{fig:image-nodes}.
Finally, the mesh is constructed based on the positions of the blue nodes (see Figure~\ref{fig:mesh}).
We note that, although node alignment on a uniform grid simplifies the implementation of the finite element method, such an alignment is not essential for the proposed approach.
Node locations can be adjusted as long as the underlying geometric information is preserved.
While this adjustment may introduce apparent distortion in the image representation, it does not affect training since the loss function is evaluated at the true spatial locations of the nodes.
The solution can also be visualized accurately by reconstructing it using the physical coordinates.
All nodes are numbered in the order of the image pixels, regardless of whether they are blue or white. 

The load vector $\boldsymbol{b}$ associated with \eqref{FE_system_b} is embedded into the full image grid so that its dimension matches the number of pixels in the input image.
Only the entries associated with the degrees of freedom in the FE formulation are evaluated through numerical integration using the true spatial coordinates, while the remaining entries are set to zero.
Moreover, no physical or spatial distortion is introduced because all computations in the loss function are carried out using the true spatial coordinates of the nodes.
A masking image can be used to enforce zero values at the corresponding pixels in the output as well.
This approach allows us to embed the spatial structure of the PDE into a convolutional architecture, enabling the use of CNNs to approximate differential operators efficiently. 

\begin{figure}[t!]
\centering

\begin{subfigure}[t]{0.32\textwidth}
\centering
\begin{tikzpicture}[scale=0.65, transform shape]

\draw[white, line width = 1pt] (0,0)--(5,0)--(5,5)--(0,5)--(0,0)--(1,0);

\draw[blue, thick, fill=blue!10] (0.25,0.25)--(0.25,4.75)--(4.75,4.75)--(4.75,0.25)--(0.25,0.25)--(0.25,1);
\draw[blue, thick, fill=white] (1.75,1.75)--(1.75,3.25)--(3.25,3.25)--(3.25,1.75)--(1.75,1.75)--(1.75,2);

\end{tikzpicture}
\caption{Domain}
\label{fig:domain}
\end{subfigure}
\hfill
\begin{subfigure}[t]{0.32\textwidth}
\centering
\begin{tikzpicture}[scale=0.65, transform shape]

\foreach \x in {1,...,9} {
  \draw[black, line width=1pt] (0.5*\x,0) -- (0.5*\x,5);
}
\foreach \y in {1,...,9} {
  \draw[black, line width=1pt] (0,0.5*\y) -- (5,0.5*\y);
}

\draw[black, line width = 1pt] (0,0)--(5,0)--(5,5)--(0,5)--(0,0)--(1,0);

\draw[blue, thick] (0.25,0.25)--(0.25,4.75)--(4.75,4.75)--(4.75,0.25)--(0.25,0.25)--(0.25,1);
\draw[blue, thick] (1.75,1.75)--(1.75,3.25)--(3.25,3.25)--(3.25,1.75)--(1.75,1.75)--(1.75,2);

\foreach \x in {0,...,3} {
  \foreach \y in {0,...,9} {
    \fill[blue!100] (0.5*\x+0.25,0.5*\y+0.25) circle (2pt);  
  }
}

\foreach \x in {6,...,9} {
  \foreach \y in {0,...,9} {
    \fill[blue!100] (0.5*\x+0.25,0.5*\y+0.25) circle (2pt);  
  }
}

\foreach \x in {4,5} {
  \foreach \y in {0,...,3} {
    \fill[blue!100] (0.5*\x+0.25,0.5*\y+0.25) circle (2pt);  
  }
}

\foreach \x in {4,5} {
  \foreach \y in {6,...,9} {
    \fill[blue!100] (0.5*\x+0.25,0.5*\y+0.25) circle (2pt);  
  }
}

\draw[blue] (2.25, 2.25) circle (2pt);
\draw[blue] (2.75, 2.25) circle (2pt);
\draw[blue] (2.25, 2.75) circle (2pt);
\draw[blue] (2.75, 2.75) circle (2pt);

\end{tikzpicture}
\caption{Nodes on an image}
\label{fig:image-nodes}
\end{subfigure}
\hfill
\begin{subfigure}[t]{0.32\textwidth}
\centering
\begin{tikzpicture}[scale=0.65, transform shape]

\foreach \x in {1,...,9} {
  \draw[black, line width=1pt] (0.5*\x,0) -- (0.5*\x,5);
}
\foreach \y in {1,...,9} {
  \draw[black, line width=1pt] (0,0.5*\y) -- (5,0.5*\y);
}

\draw[black, line width = 1pt] (0,0)--(5,0)--(5,5)--(0,5)--(0,0)--(1,0);

\foreach \x in {0,1,2,3} {
  \draw[blue, thick] (0.5*\x+0.25, 0.25) -- (0.5*\x+0.25,4.75);
  \draw[blue, thick] (0.25, 0.5*\x+0.25) -- (4.75, 0.5*\x+0.25);
  \draw[blue, thick] (4.75 - 0.5*\x, 0.25) -- (4.75 - 0.5*\x,4.75);
  \draw[blue, thick] (0.25, 4.75 - 0.5*\x) -- (4.75, 4.75 - 0.5*\x);
}

\foreach \x in {1,2} {
  \draw[blue, thick] (0.5*\x+1.75, 0.25) -- (0.5*\x+1.75,1.75);
  \draw[blue, thick] (0.5*\x+1.75, 3.25) -- (0.5*\x+1.75,4.75);
  \draw[blue, thick] (0.25, 0.5*\x+1.75) -- (1.75, 0.5*\x+1.75);
  \draw[blue, thick] (3.25, 0.5*\x+1.75) -- (4.75, 0.5*\x+1.75);
}

\foreach \x in {0,...,3} {
  \foreach \y in {0,...,9} {
    \fill[blue!100] (0.5*\x+0.25,0.5*\y+0.25) circle (2pt);  
  }
}

\foreach \x in {6,...,9} {
  \foreach \y in {0,...,9} {
    \fill[blue!100] (0.5*\x+0.25,0.5*\y+0.25) circle (2pt);  
  }
}

\foreach \x in {4,5} {
  \foreach \y in {0,...,3} {
    \fill[blue!100] (0.5*\x+0.25,0.5*\y+0.25) circle (2pt);  
  }
}

\foreach \x in {4,5} {
  \foreach \y in {6,...,9} {
    \fill[blue!100] (0.5*\x+0.25,0.5*\y+0.25) circle (2pt);  
  }
}

\draw[blue] (2.25, 2.25) circle (2pt);
\draw[blue] (2.75, 2.25) circle (2pt);
\draw[blue] (2.25, 2.75) circle (2pt);
\draw[blue] (2.75, 2.75) circle (2pt);

\end{tikzpicture}
\caption{Mesh}
\label{fig:mesh}
\end{subfigure}

\caption{Illustration of the mesh generation process based on image-domain mapping.}
\label{fig:domain_to_image}
\end{figure}

To improve efficiency and flexibility, we decompose the model problem \eqref{Poisson_problem} into two subproblems.
The first subproblem is defined by
\begin{subequations}\label{Subproblem1}
\begin{align}
	- \Delta u_{1} &= f \ \ \quad \textrm{in} \ \ \Omega \label{Problem1_f} \\
    u_{1} &= 0 \, \ \ \quad \textrm{on} \ \partial\Omega_D \label{Problem1_Diri} \\
	\nabla u_{1} \cdot \boldsymbol{n} &= g_N \quad \textrm{on} \ \partial\Omega_N \label{Problem1_Neu} 
\end{align}
\end{subequations}
and the second subproblem is defined by
\begin{subequations}\label{Subproblem2}
\begin{align}
	- \Delta u_{2} &= 0 \ \ \quad \textrm{in} \ \ \Omega \label{Problem2_f} \\
	u_{2} &= g_D \quad \textrm{on} \ \partial\Omega_D \label{Problem2_Diri} \\
    \nabla u_{2} \cdot \boldsymbol{n} &= 0 \ \ \quad \textrm{on} \ \partial\Omega_N \label{Problem2_Neu} 
\end{align}
\end{subequations}
By linearity of the problem, the solution to the original problem \eqref{Poisson_problem} can be written as
\begin{equation} \label{decomposition}
u = u_1 + u_2.
\end{equation}
This decomposition separates the contributions of the source term and the Neumann boundary condition from those of the Dirichlet boundary condition.
Each subproblem can be treated independently and its solutions are combined to obtain the final solution.
This decomposition simplifies the learning task and improves training efficiency.

\subsection{Finite Element Convolutional Operator Network}\label{subsec:FEUNet}
We now describe the finite element convolutional operator network (FE-CON).
The method approximates the mapping from the load vector to the corresponding finite element solution by using a convolutional neural network and a residual-based loss function.

The FE-CON model takes a single-channel image corresponding to the load vector $\boldsymbol{b}$ as input.
The input is defined on the full image grid introduced in Section~\ref{sec:decomposition}, and its dimension matches the number of pixels.
The output is also a single-channel image that represents the predicted solution, and it shares the same spatial resolution as the input.
After prediction, the Dirichlet boundary values are enforced by overwriting the corresponding pixels with the prescribed boundary data.

Based on the residual formulation in Section~\ref{sec:operator_perspective}, we define the following loss function:
\begin{equation}\label{fe_loss}
\mathcal{L}_{\text{FEM}} = \frac{1}{N_s} \sum_{k = 1}^{N_s} \| \boldsymbol{b}^k - A  \hat{\boldsymbol{u}}^k\|_2^2,
\end{equation}
where $N_s$ is the number of training samples, $\boldsymbol{b}^k$ is the $k-$th input, and $\hat{\boldsymbol{u}}^k$ is the corresponding output.
This loss measures the residual of the discrete system and enforces consistency with the finite element formulation.

One way to justify the decomposition \eqref{Subproblem1}--\eqref{Subproblem2} is through the linear system of equations \eqref{FEsystem} arising from the FEM formulation.
The right-hand side vector $\boldsymbol{b}$ in the linear system can be decomposed as $\boldsymbol{b} = \boldsymbol{b}_1 + \boldsymbol{b}_2$, where two vectors are defined as follows:
\begin{subequations}
\begin{align}
(\boldsymbol{b}_1)_j &= \int_{\Omega} f \psi_j \, dx + \int_{\partial\Omega_N} g_N \psi_j \, ds, \label{b1} \\
(\boldsymbol{b}_2)_j &= - \sum_{k=1}^{N} U_k \int_{\Omega} \nabla \psi_j \cdot \nabla \psi_k \, dx. \label{b2}
\end{align}    
\end{subequations}
Here, $\boldsymbol{b}_1$ arises naturally from the variational formulation of FEM, while $\boldsymbol{b}_2$ results from the decomposition 
$u = w + u_D$ introduced in Section~\ref{sec:preliminaries}.
The solution to the linear system in equation \eqref{FEsystem} can then be expressed as
$$\boldsymbol{w} = A^{-1} \boldsymbol{b} = A^{-1} (\boldsymbol{b}_1 + \boldsymbol{b}_2) = A^{-1} \boldsymbol{b}_1 + A^{-1} \boldsymbol{b}_2.$$
When expressed in \eqref{FEsolution_rep}, $A^{-1} \boldsymbol{b}_1$ corresponds to the solution of Subproblem 1, and $A^{-1} \boldsymbol{b}_2+ U_D$ corresponds to the solution of Subproblem 2.
For the subproblems, the term $\boldsymbol{b}$ in the loss function \eqref{fe_loss} is replaced with $\boldsymbol{b}_1$ or $\boldsymbol{b}_2$, depending on the subproblem.

The FE-CON model can be applied either to the original problem or to each subproblem in the decomposition.
In the latter case, the network is trained separately for $\boldsymbol{b}_1$ and $\boldsymbol{b}_2$, and the final solution is obtained by combining the predicted solutions according to the decomposition $u = u_1 + u_2$.
The decomposition-based approach reduces the complexity of the learning task by splitting the problem into simpler components.
However, it requires training two separate models.
For this reason, we also consider a single model that directly approximates the solution to the original problem.

\section{Error Analysis} \label{sec:analysis}
In this section, we present an error analysis of the proposed method and establish a relation between the residual-based loss and the approximation error.
To simplify the error analysis, we consider the Poisson equation on the unit square domain $\Omega = (0,1)^2$ with homogeneous Dirichlet boundary conditions, i.e., $g_D = 0$ and $\partial\Omega_D = \partial\Omega$.
We discretize $\Omega$ by a uniform $N\times N$ grid and use the corresponding finite element mesh with mesh size $h = 1/(N-1)$.

The linear system takes the same form as \eqref{FEsystem}, but $\boldsymbol{b}$ is given by
\begin{equation*}
    b_j = \int_\Omega f \psi_j\ dx.
\end{equation*}
Here, $U = W$ and $U \in S\cap H^1_D(\Omega)$.

Let $M$ denote the global mass matrix defined by
\begin{equation} \label{mass_matrix}
    M_{ij} = \int_\Omega \psi_i \psi_j\ dx.
\end{equation}
Since the basis functions $\{\psi_i\}$ are linearly independent, the global mass matrix $M$ is symmetric positive definite.
Let $n_h$ denote the number of finite element degrees of freedom.
If $v_h \in S \cap H^1_D(\Omega)$, then $v_h= \sum_{i=1}^{n_h} v_i \psi_i$.
Consequently, $\nabla v_h = \sum_{i=1}^{n_h} v_i \nabla\psi_i$.
Let $\boldsymbol{v} = (v_1, \ldots, v_{n_h}) \in \mathbb{R}^{n_h}$ denote the coefficient vector.
Then, from \eqref{matrix_A} and \eqref{mass_matrix}, it follows that
\begin{equation} \label{norm_w_matrix}
    \|\nabla v_h\|^2_{L^2(\Omega)} = \boldsymbol{v}^T A \boldsymbol{v} \qquad \text{and} \qquad \|v_h\|^2_{L^2(\Omega)} = \boldsymbol{v}^T M \boldsymbol{v}.
\end{equation}

\begin{lemma}
    Let $v_h = \sum_{i=1}^{n_h} v_i \psi_i \in S \cap H^1_D(\Omega)$  and $\boldsymbol{v} = (v_1, \ldots, v_{n_h}) \in \mathbb{R}^{n_h}$. If $\lambda_{\text{min}}$ is the minimum eigenvalue of the global mass matrix $M$, then
    \begin{equation} \label{l2L2}
        \|\boldsymbol{v}\|_2^2 \le \frac{1}{\lambda_{\text{min}}} \|v_h\|_{L^2(\Omega)}^2.
    \end{equation}
\end{lemma}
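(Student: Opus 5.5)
The plan is to combine the identity \eqref{norm_w_matrix} with the Rayleigh quotient characterization of the smallest eigenvalue of a symmetric positive definite matrix. By \eqref{norm_w_matrix}, for $v_h = \sum_{i=1}^{n_h} v_i\psi_i \in S\cap H^1_D(\Omega)$ we have $\|v_h\|_{L^2(\Omega)}^2 = \boldsymbol{v}^T M \boldsymbol{v}$. Here $M$ is understood as the mass matrix restricted to the $n_h$ degrees of freedom in $S\cap H^1_D(\Omega)$, i.e.\ the principal submatrix indexed by the interior nodes. This restriction is still symmetric positive definite, because the restricted basis functions remain linearly independent.

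Next I would invoke the spectral theorem. Since $M$ is real symmetric, it admits an orthonormal eigenbasis $\{\boldsymbol{q}_k\}_{k=1}^{n_h}$ with eigenvalues $0<\lambda_{\min}=\lambda_1\le\cdots\le\lambda_{n_h}$. Positivity of the eigenvalues follows from positive definiteness. Expanding $\boldsymbol{v}=\sum_k c_k \boldsymbol{q}_k$ gives
\begin{equation*}
\boldsymbol{v}^T M\boldsymbol{v} = \sum_{k} \lambda_k c_k^2 \ \ge\ \lambda_{\min}\sum_k c_k^2 = \lambda_{\min}\|\boldsymbol{v}\|_2^2,
\end{equation*}
where the last equality uses orthonormality of the eigenbasis.

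Combining the two steps gives $\lambda_{\min}\|\boldsymbol{v}\|_2^2 \le \|v_h\|_{L^2(\Omega)}^2$. Dividing by $\lambda_{\min}>0$ yields \eqref{l2L2}.

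There is no real obstacle. The only points needing care are that $\lambda_{\min}$ must refer to the mass matrix on the same index set as $\boldsymbol{v}$, so that \eqref{norm_w_matrix} applies, and that $\lambda_{\min}>0$ so the division is legitimate. Both follow from the symmetric positive definiteness already noted in the text. Estimating how $\lambda_{\min}$ scales with $h$ (it behaves like $h^2$ on the uniform mesh) is not needed for this lemma and would be addressed separately when the lemma is used.
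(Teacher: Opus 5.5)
Your proof is correct and follows the paper's argument: the Rayleigh-quotient bound $\lambda_{\min}\|\boldsymbol{v}\|_2^2 \le \boldsymbol{v}^T M \boldsymbol{v}$ combined with the identity $\boldsymbol{v}^T M \boldsymbol{v} = \|v_h\|_{L^2(\Omega)}^2$ from \eqref{norm_w_matrix}. Your care about the index set is a harmless refinement: even if $M$ is the full mass matrix, extending $\boldsymbol{v}$ by zeros gives the same inequality.
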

\begin{proof}
    \begin{align*}
        \|\boldsymbol{v}\|_2^2 \le \frac{1}{\lambda_{\text{min}}} \boldsymbol{v}^T M \boldsymbol{v} = \frac{1}{\lambda_{\text{min}}} \|v_h\|_{L^2(\Omega)}^2.
    \end{align*}
\end{proof}

The following theorem establishes an error estimate.

\begin{theorem}
    Let $u\in H^2(\Omega)\cap H^1_D(\Omega)$ be the weak solution and $\hat{u} \in S\cap H^1_D(\Omega)$ be the FE-CON prediction. Then
    \begin{equation}
        \|\nabla u - \nabla \hat{u}\|_{L^2(\Omega)}^2 \le C_1 h^2 \|u\|^2_{H^2(\Omega)} + C_2 h^{-2} \|\boldsymbol{b} - A \hat{\boldsymbol{u}}\|_2^2.
    \end{equation}
\end{theorem}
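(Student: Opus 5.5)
We split the error through the exact finite element solution $U\in S\cap H^1_D(\Omega)$, whose coefficient vector $\boldsymbol{w}$ satisfies $A\boldsymbol{w}=\boldsymbol{b}$. The triangle inequality together with $(a+b)^2\le 2a^2+2b^2$ gives
\begin{equation*}
\|\nabla u-\nabla\hat u\|_{L^2(\Omega)}^2 \le 2\|\nabla u-\nabla U\|_{L^2(\Omega)}^2 + 2\|\nabla U-\nabla\hat u\|_{L^2(\Omega)}^2 .
\end{equation*}
The first term is the classical discretization error. By C\'ea's lemma (Galerkin orthogonality, with coercivity constant $1$ for the Laplacian on $H^1_D$) and the standard interpolation estimate for piecewise linear (or bilinear) elements on the quasi-uniform grid, it is bounded by $\|\nabla(u-I_hu)\|_{L^2(\Omega)}^2\le C h^2\|u\|_{H^2(\Omega)}^2$. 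This produces the $C_1h^2$ term.

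For the second term, set $e_h=U-\hat u\in S\cap H^1_D(\Omega)$ with coefficient vector $\boldsymbol{e}=\boldsymbol{w}-\hat{\boldsymbol{u}}$. Then $A\boldsymbol{e}=\boldsymbol{b}-A\hat{\boldsymbol{u}}=:\boldsymbol{r}$. By \eqref{norm_w_matrix} and Cauchy--Schwarz,
\begin{equation*}
\|\nabla e_h\|_{L^2(\Omega)}^2=\boldsymbol{e}^TA\boldsymbol{e}=\boldsymbol{e}^T\boldsymbol{r}\le \|\boldsymbol{e}\|_2\|\boldsymbol{r}\|_2 .
\end{equation*}
The preceding lemma \eqref{l2L2} then gives $\|\boldsymbol{e}\|_2\le \lambda_{\min}^{-1/2}\|e_h\|_{L^2(\Omega)}$. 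The discrete Poincar\'e inequality (the continuous one, since $e_h\in H^1_0$) gives $\|e_h\|_{L^2(\Omega)}\le C_P\|\nabla e_h\|_{L^2(\Omega)}$. Dividing by $\|\nabla e_h\|_{L^2(\Omega)}$ yields
\begin{equation*}
\|\nabla e_h\|_{L^2(\Omega)}^2\le C_P^2\lambda_{\min}^{-1}\|\boldsymbol{r}\|_2^2 .
\end{equation*}

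The remaining step, which I expect to be the only genuinely mesh-dependent point, is a lower bound $\lambda_{\min}\ge c h^2$ for the mass matrix on the uniform 2D grid. I would prove it elementwise. On each element $K$, scaling to the reference element gives $\int_K v_h^2\,dx\ge c|K|\sum_{\text{nodes of }K}v_i^2$, with $|K|\sim h^2$, because the local reference mass matrix is positive definite. Summing over elements, each node belongs to at least one element, so $\boldsymbol{v}^TM\boldsymbol{v}\ge c h^2\|\boldsymbol{v}\|_2^2$. Hence $\lambda_{\min}^{-1}\le Ch^{-2}$, and the second term is bounded by $C_2h^{-2}\|\boldsymbol{b}-A\hat{\boldsymbol{u}}\|_2^2$. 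Combining the two estimates and absorbing the factor $2$ into $C_1$ and $C_2$ completes the proof.
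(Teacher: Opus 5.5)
Your proposal is correct and follows the paper's proof essentially step for step: you split through the exact finite element solution, bound $\boldsymbol{e}^TA\boldsymbol{e}=\boldsymbol{e}^T(\boldsymbol{b}-A\hat{\boldsymbol{u}})$ by Cauchy--Schwarz, and then apply the mass-matrix lemma, Poincar\'e's inequality and $\lambda_{\min}\ge ch^2$. The one addition is that you prove $\lambda_{\min}\ge ch^2$ with an elementwise scaling argument, whereas the paper only asserts it.
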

\begin{proof}
    Let $u_h^* \in S\cap H^1_D(\Omega)$ be the finite element solution. 
    The triangle inequality and a priori error estimate of FEM yield
    \begin{equation} \label{error_bound}
        \|\nabla u - \nabla \hat{u}\|_{L^2(\Omega)}^2 \le C h^2 \|u\|_{H^2(\Omega)}^2 + 2\|\nabla u_h^* - \nabla \hat{u}\|_{L^2(\Omega)}^2. 
    \end{equation}
    By Cauchy-Schwarz inequality, the discrete Poincar\'e inequality, \eqref{norm_w_matrix}, and \eqref{l2L2}, we have
    \begin{align*}
        \|\nabla u_h^* - \nabla \hat{u}\|_{L^2(\Omega)}^2 &\le \|\boldsymbol{u}^*_h - \hat{\boldsymbol{u}}\|_2 \|\boldsymbol{b} - A \hat{\boldsymbol{u}}\|_2 \\
        &\le \frac{C_P}{\sqrt{\lambda_{\textrm{min}}}} \|\nabla u_h^* - \nabla\hat{u}\|_{L^2(\Omega)} \|\boldsymbol{b} - A \hat{\boldsymbol{u}}\|_2
    \end{align*}
    Since $\lambda_{\text{min}} \ge C_M h^2$, we have
    $$\|\nabla u_h^* - \nabla \hat{u}\|_{L^2(\Omega)} \le C h^{-1}\|\boldsymbol{b} - A \hat{\boldsymbol{u}}\|_2.$$
    Substituting the above estimate into \eqref{error_bound} yields the desired result. This completes the proof.
\end{proof}

\begin{corollary} \label{cor_train}
Let $u^k\in H^2(\Omega)\cap H^1_D(\Omega)$ be the weak solution for $f^k$ in the homogeneous Dirichlet boundary setting considered above. 
Let $\hat{u}^k \in S\cap H^1_D(\Omega)$ be the FE-CON prediction corresponding to $\boldsymbol{b}^k$. 
Then
\begin{equation}
    \frac{1}{N_s} \sum_{k=1}^{N_s} \|\nabla u^k - \nabla \hat{u}^k\|_{L^2(\Omega)}^2 \le C_1 h^2 \frac{1}{N_s} \sum_{k=1}^{N_s} \|u^k\|_{H^2(\Omega)}^2 + C_2 h^{-2} \mathcal{L}_{\text{FEM}}.
\end{equation}
\end{corollary}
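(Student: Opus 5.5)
The corollary follows from the preceding theorem applied sample by sample, then averaged. For each $k=1,\dots,N_s$, the pair $(u^k,\hat u^k)$ satisfies the hypotheses of the theorem: $u^k\in H^2(\Omega)\cap H^1_D(\Omega)$ is the weak solution for $f^k$ with homogeneous Dirichlet data, and $\hat u^k\in S\cap H^1_D(\Omega)$ is the FE-CON output for the load vector $\boldsymbol{b}^k$, with $b^k_j=\int_\Omega f^k\psi_j\,dx$. The theorem therefore gives
\begin{equation*}
\|\nabla u^k-\nabla\hat u^k\|_{L^2(\Omega)}^2\le C_1h^2\|u^k\|_{H^2(\Omega)}^2+C_2h^{-2}\|\boldsymbol{b}^k-A\hat{\boldsymbol{u}}^k\|_2^2 .
\end{equation*}

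The point to check is that $C_1$ and $C_2$ do not depend on $k$. Tracing the theorem's proof, $C_1$ comes from the a priori FEM estimate, namely the interpolation constant and the factor $2$ from the triangle inequality. $C_2$ comes from the discrete Poincar\'e constant $C_P$ and the mass-matrix lower bound $\lambda_{\min}\ge C_Mh^2$. All of these depend only on the domain and the (fixed, uniform) mesh family. The stiffness matrix $A$ is likewise independent of the data $f^k$, so the same constants serve every sample.

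Summing the $N_s$ inequalities and dividing by $N_s$, the last term becomes
\begin{equation*}
C_2h^{-2}\,\frac{1}{N_s}\sum_k\|\boldsymbol{b}^k-A\hat{\boldsymbol{u}}^k\|_2^2=C_2h^{-2}\mathcal{L}_{\text{FEM}}
\end{equation*}
by the definition \eqref{fe_loss}. This gives the stated bound.

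There is no real obstacle beyond the uniformity of the constants. One caveat is that the residual in \eqref{fe_loss} must be read on the degrees of freedom only, so that it coincides with the residual in the theorem. This holds because the outputs are masked to zero at non-DOF pixels and the Dirichlet pixels are overwritten with the boundary data.
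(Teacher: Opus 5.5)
Your proposal is correct and matches the paper's argument: the paper states the corollary without a separate proof, as an immediate consequence of applying the preceding theorem to each sample with constants $C_1, C_2$ that depend only on the domain and mesh, averaging over $k$, and using the definition \eqref{fe_loss} of $\mathcal{L}_{\text{FEM}}$. Your closing caveat about reading the residual on the degrees of freedom is harmless but not needed, since any extra residual entries at non-DOF pixels could only enlarge $\mathcal{L}_{\text{FEM}}$, which leaves the inequality valid.
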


When the vectors $\{\boldsymbol{b}^k\}_{k=1}^{N_s}$ are used as the training inputs, the left-hand side of Corollary~\ref{cor_train} represents the average of the squared $H^1$-seminorm error over the training set.
Thus, if the loss $\mathcal{L}_{\text{FEM}}$ is sufficiently small in the sense specified below, the average of squared error is bounded by $O(h^2)$.
This result shows that the training loss directly controls the approximation error and provides a guideline for achieving optimal convergence.

\begin{remark}\label{rem:loss_scaling}
     To achieve the optimal rate of convergence over the training set, $\mathcal{L}_{\text{FEM}}$ must be bounded by $O(h^4)$. 
     This suggests a strategy for training. If $\mathcal{L}_{\text{FEM}} = L_c$ when $h = h_c$, then FE-CON should be trained until $\mathcal{L}_{\text{FEM}} \le L_c / 16$ when $h = h_c / 2$.
\end{remark}

\begin{remark}\label{rem:high_order}
The above analysis suggests that the training loss must decrease at a rate consistent with the finite element approximation error in order to achieve optimal convergence.
This principle extends naturally to higher-order finite element spaces, where the approximation error scales as $O(h^{2\ell})$ for a $P_\ell$ finite element approximation.
In particular, this implies that the loss should decrease at a rate of $O(h^{2\ell+2})$, based on the balance between the approximation error and the residual term in Corollary~\ref{cor_train}.
\end{remark}

\begin{remark}\label{rmk:hong2024}
    In \cite{hong2024error}, the error is decomposed into the FEM error, approximation error, and generalization error.
    In particular, Theorem 4.10 bounds the approximation and generalization errors by $\kappa(A)^{1+d}/\sqrt{n}$ and $\kappa(A)^{1+3d/2}/\sqrt{M}$, respectively, where $\kappa(A)$ is the condition number of the finite element system matrix, $d$ is the spatial dimension, $n$ is the hidden-layer width, and $M$ is the number of training samples.
    Since $\kappa(A) = O(h^{-2})$ when $d=2$, these bounds grow rapidly under mesh refinement, suggesting potentially pessimistic scaling on fine meshes.
    In contrast, our analysis bounds the error in terms of the FEM error and the residual-based loss, which provides a more direct characterization of the approximation error.
\end{remark}

\section{Numerical Experiments}\label{sec:numerical_exp}

In this section, we present numerical experiments for the proposed residual-based convolutional method applied to elliptic PDEs.
We consider FE-CON with triangular elements (FE-T) and rectangular elements (FE-R).
A U-Net-style fully convolutional network is used, and the architectural details are provided in Appendix~\ref{app:architecture}.
In the decomposed formulation, each subproblem model uses the same architecture and capacity as the corresponding original model.
The models are trained without paired input--output data by minimizing residual-based loss functions. 

We use superscripts to indicate the discretization method, such as FE-T or FE-R.
Thus, $\hat{u}^{\mathrm{FE\text{-}T}}$ and $\hat{u}^{\mathrm{FE\text{-}R}}$ denote the approximate solutions obtained by FE-CON models with triangular and rectangular elements, respectively.
A symbol without a subscript, such as $\hat{u}$, denotes the approximate solution of the original model.
The subscript $s$ denotes the decomposed solution $\hat{u}_s = \hat{u}_1 + \hat{u}_2$, and $\hat{u}_1$ and $\hat{u}_2$ denote the approximate solutions of the two subproblems \eqref{Subproblem1} and \eqref{Subproblem2}, respectively.

Experiments are conducted on $N\times N$ grids with $N\in\{16,32,64,128\}$.
The accuracy of all models is evaluated using the relative error
\begin{align*}
\textrm{Rel}_{H^1}(v) &:= \frac{\|u_h^* - v\|_{H^1(\Omega)}}{\|u_h^*\|_{H^1(\Omega)}},
\end{align*}
where $u^*_h$ is the reference finite element solution computed on a sufficiently fine grid (approximately $1024\times1024$) in double-precision.

Unless otherwise stated, all experiments are conducted under the same experimental setting.
The Adam optimizer is used with an initial learning rate of $10^{-4}$, combined with a cosine decay schedule over the entire training horizon.
All experiments are trained for $10{,}000$ epochs with a batch size of $32$.
Training is performed in single-precision (float32), and model checkpoints are selected based on the minimum training loss.
All experiments are conducted on an Ubuntu 22.04 system equipped with an NVIDIA RTX A6000 GPU with 48 GB of memory, and the models are implemented in Python using JAX.

\subsection{Performance for the model problem}\label{ex:performance}

We first consider the Poisson problem \eqref{Poisson_problem} on the unit square domain, $\Omega=(0,1)^2$, with a Neumann condition on the left boundary and Dirichlet conditions on the remaining boundaries.
In this subsection, we do not aim to achieve optimal convergence.
Instead, we investigate the behavior of the models under a uniform training condition.
A detailed study of optimal convergence will be presented in the next subsection.

We use a fixed training set of $3{,}000$ samples and an independent test set of $1{,}000$ samples, and keep hyperparameter settings fixed across grid sizes and models.
The source term $f$ and boundary data ($g_D$, $g_N$) are sampled independently from randomized smooth sinusoidal functions of the form:
\begin{equation}\label{samples_Poisson}
    \phi(x,y)=a_1\sin(b_1 x+b_2 y)+a_2\cos(b_3 x+b_4 y),
\end{equation}
where the coefficients $a_1,a_2,b_1,\dots,b_4 \in [0,5]\subseteq\mathbb{R}$ are sampled independently.
This choice follows the setup used in \cite{MR4888707} and provides a convenient and widely used benchmark for operator learning methods.
This independent sampling avoids spurious correlations between interior and boundary inputs and exposes the models to fully varying mixed boundary conditions.

\begin{figure}[t!]
    \centering
    \begin{subfigure}{0.49\linewidth}
        \centering
        \includegraphics[height=6cm]{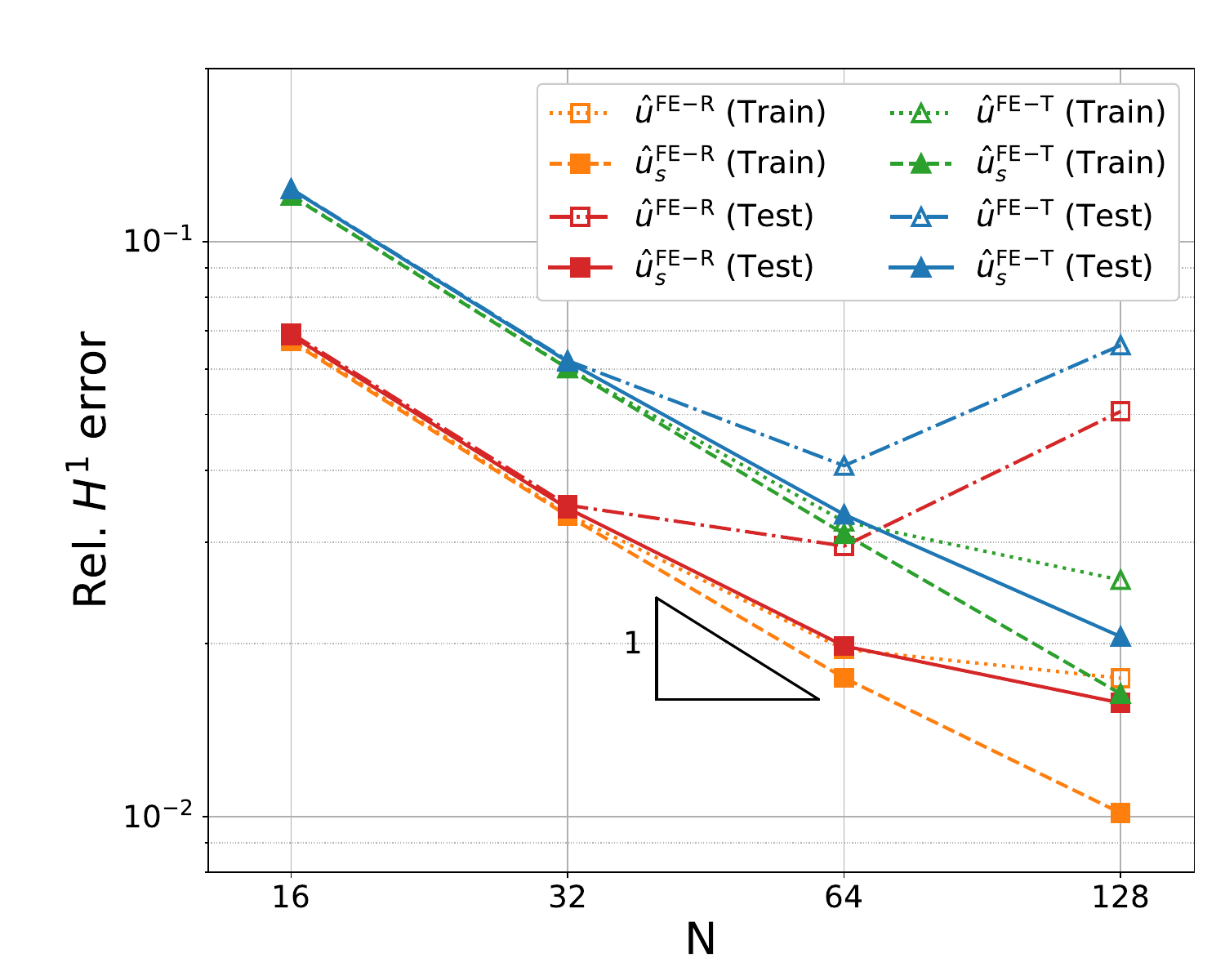}
        \caption{Relative error}
        \label{fig:ex1_error}
    \end{subfigure}
    \hfill
    \begin{subfigure}{0.49\linewidth}
        \centering
        \includegraphics[height=6cm]{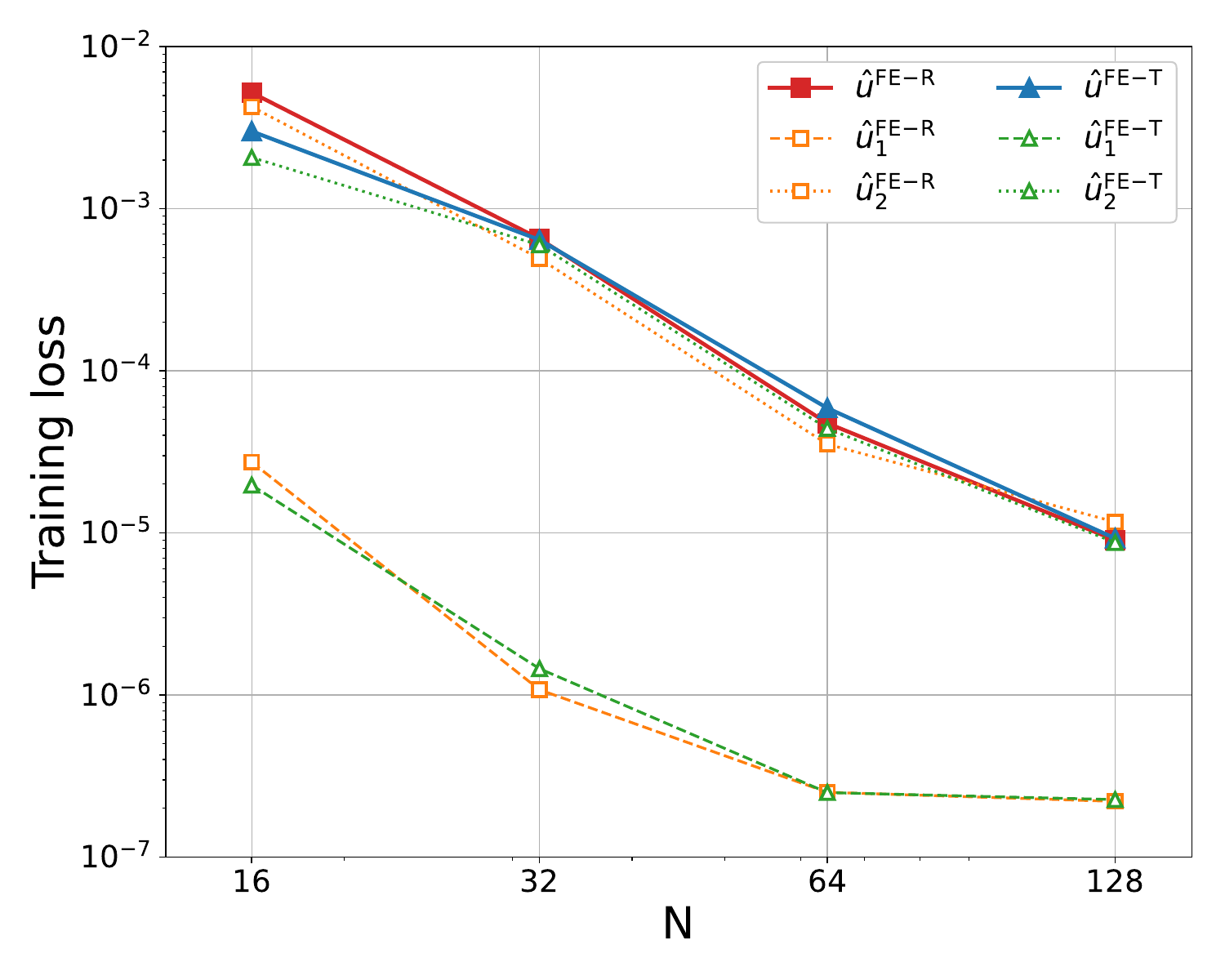}
        \caption{Training loss} 
        \label{fig:ex1_loss}
    \end{subfigure}
    \caption{Relative $H^1$ errors and training losses of FE-CON under grid refinement. 
    }
    \label{fig:ex1}
\end{figure}

Figure~\ref{fig:ex1_error} shows the convergence results.
On the training set, all models exhibit a clear reduction of the relative $H^1$ error as the grid is refined.
The decomposed formulation consistently achieves smaller errors than the original formulation.
On the test set, the errors are generally larger than those observed during training.
For the original formulation, while the test error decreases as the grid is refined up to $N=64$, it increases at $N=128$, which results in a noticeable gap between training and test errors.
In contrast, the decomposed formulation produces smaller test errors on fine grids, which indicates improved generalization under grid refinement.

Figure~\ref{fig:ex1_loss} shows the training loss for the original and decomposed formulations.
The loss of Subproblem 1 is significantly smaller than that of the original problem, whereas the loss corresponding to Subproblem 2 is comparable to that of the original problem.
This indicates that decomposition does not necessarily simplify the optimization process.
Nevertheless, the decomposed formulation consistently yields lower test errors across grid resolutions in Figure~\ref{fig:ex1}.
This suggests that its advantage arises not only from optimization, but also from learning structurally simpler sub-operators whose combination improves generalization of the full solution operator.

\subsection{Achieving optimal convergence and generalization} \label{numerical_verification}

In Section~\ref{ex:performance}, all models are trained with a fixed number of training samples and identical hyperparameters across grid sizes.
Under this setting, the training error does not always exhibit the optimal convergence rate predicted by classical FEM theory, and may even deteriorate as the grid is refined.
According to Corollary~\ref{cor_train} and Remark~\ref{rem:loss_scaling}, achieving the optimal convergence rate requires the training loss to decrease at a rate that depends explicitly on the grid size. 
More precisely, when the grid resolution $N$ is doubled
(approximately, $h \mapsto h/2$), training should continue until the loss satisfies a scaling condition of the form
\[
\mathcal{L}_{\mathrm{FEM}}(2N) \le \mathcal{L}_{\mathrm{FEM}}(N)/16.
\]

Figure~\ref{fig:FE_R_error_analysis_train} shows that, when this stopping criterion is enforced, the training error recovers the optimal convergence behavior under grid refinement.
The theoretically derived loss scaling ($\gamma = 4$) leads to consistent optimal convergence, while slightly weaker scalings still yield near-optimal rates.
In contrast, insufficient loss decay results in degraded convergence in agreement with the theoretical analysis.
Figure~\ref{fig:FE_R_error_analysis_test} shows the corresponding test errors for different loss scalings.
Unlike the training error, the test error is not minimized at $\gamma = 4$.
Instead, larger loss reduction leads to increased test error, which indicates overfitting under a limited number of training samples.
This behavior is consistent with the generalization behavior discussed below.

\begin{figure}[t!]
    \centering
    \begin{subfigure}{0.49\linewidth}
        \centering
        \includegraphics[height=6cm]{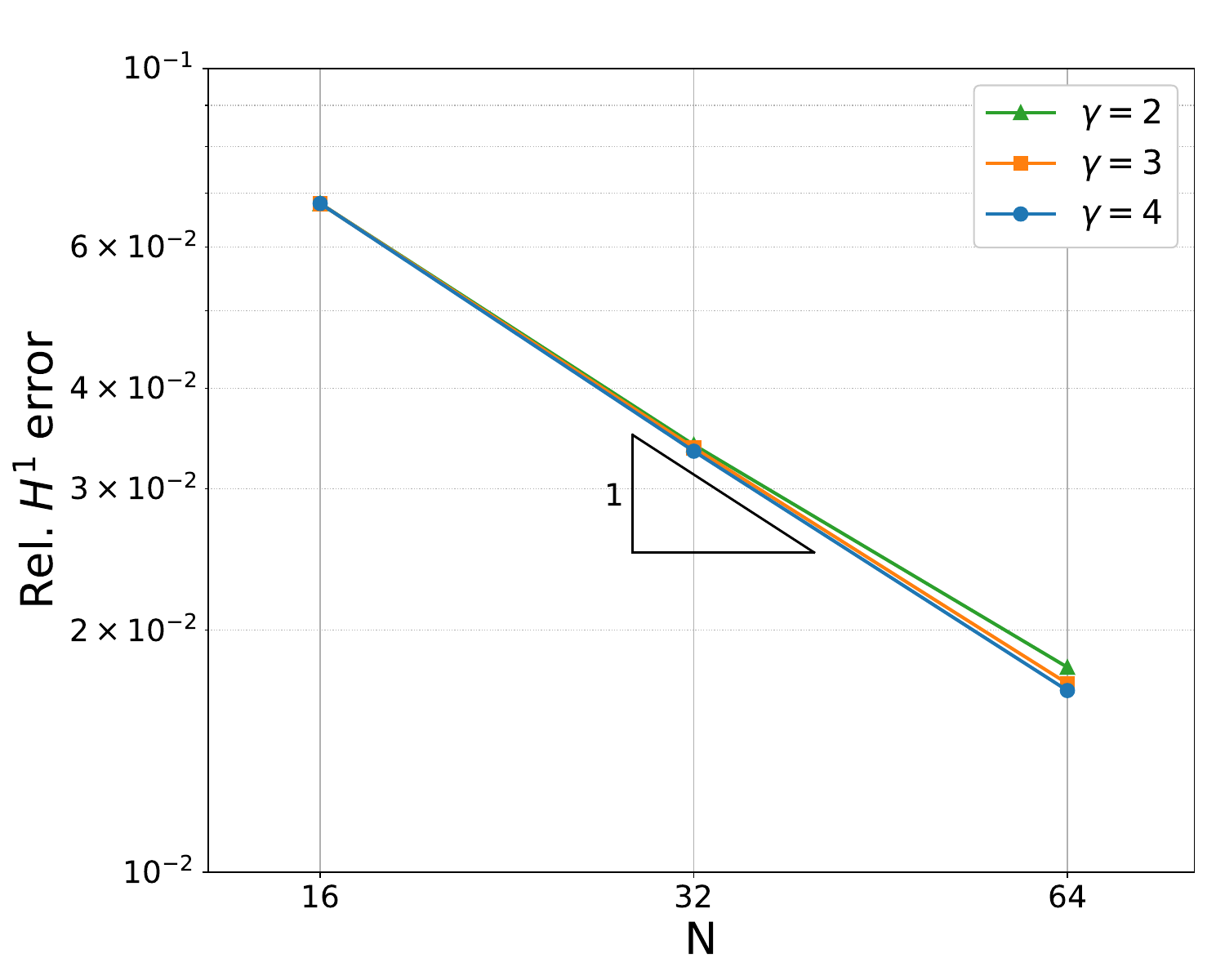}
        \caption{Training error} \label{fig:FE_R_error_analysis_train}
    \end{subfigure}
    \hfill
    \begin{subfigure}{0.49\linewidth}
        \centering
        \includegraphics[height=6cm]{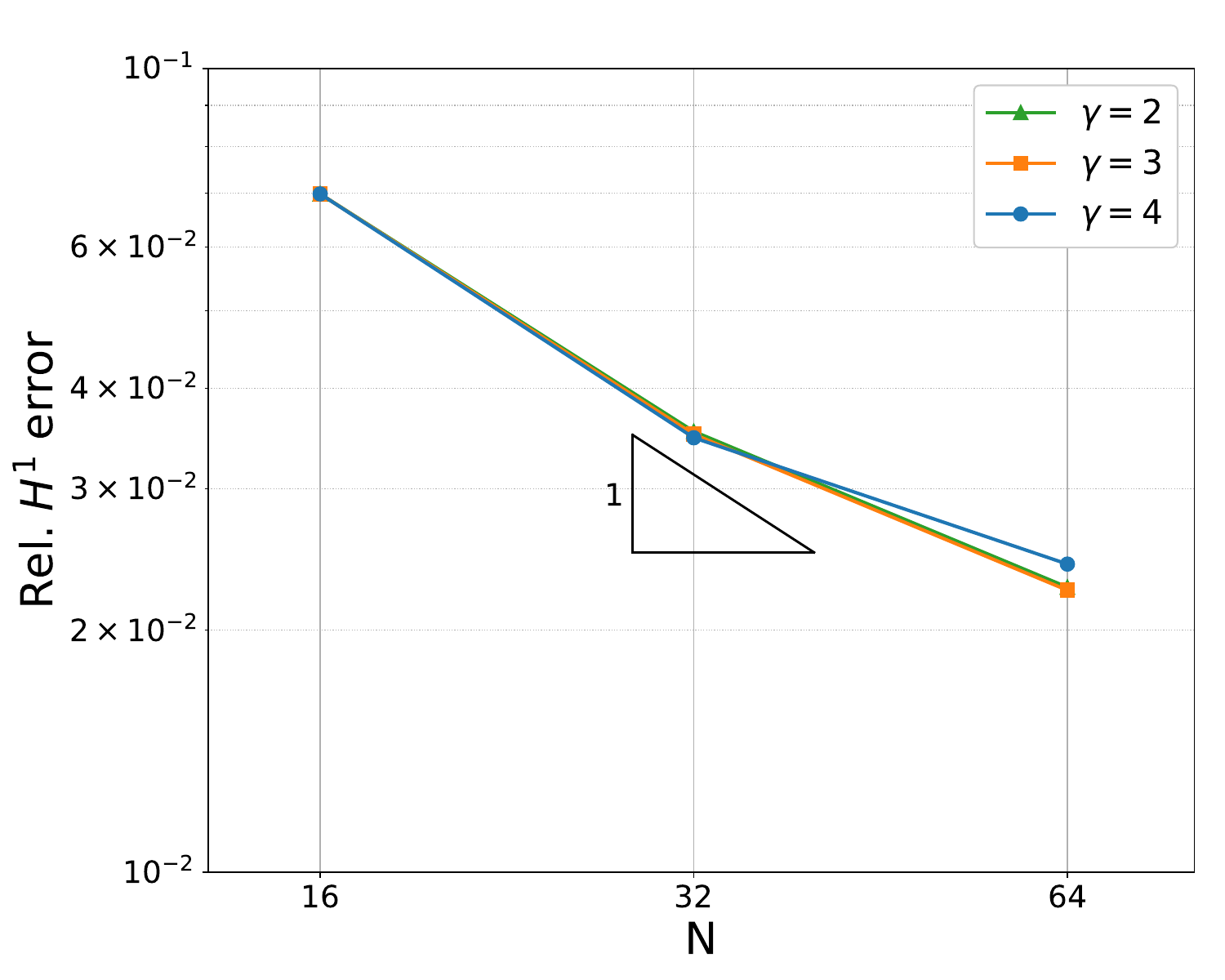}
        \caption{Test error} \label{fig:FE_R_error_analysis_test}
    \end{subfigure}
    \caption{Relative $H^1$ errors of FE-CON under different loss scalings $(O(h^\gamma))$.}
    \label{fig:error_analysis}
\end{figure}

\begin{figure}[t!]
    \centering
    \begin{subfigure}{0.49\linewidth}
        \centering
        \includegraphics[height=6cm]{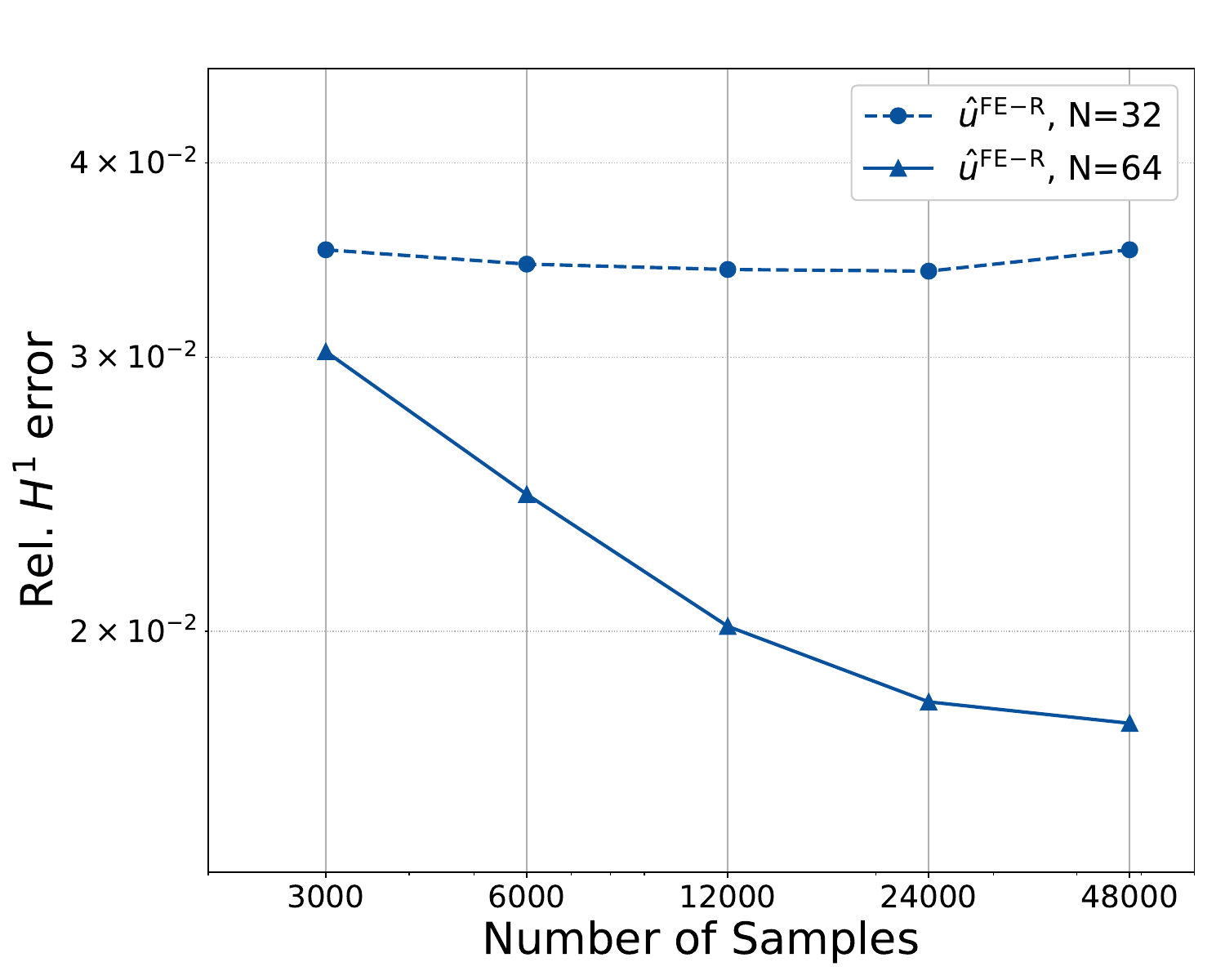}
        \caption{Test error}
        \label{fig:generalization_error_test}
    \end{subfigure}
    \hfill
    \begin{subfigure}{0.49\linewidth}
        \centering
        \includegraphics[height=6cm]{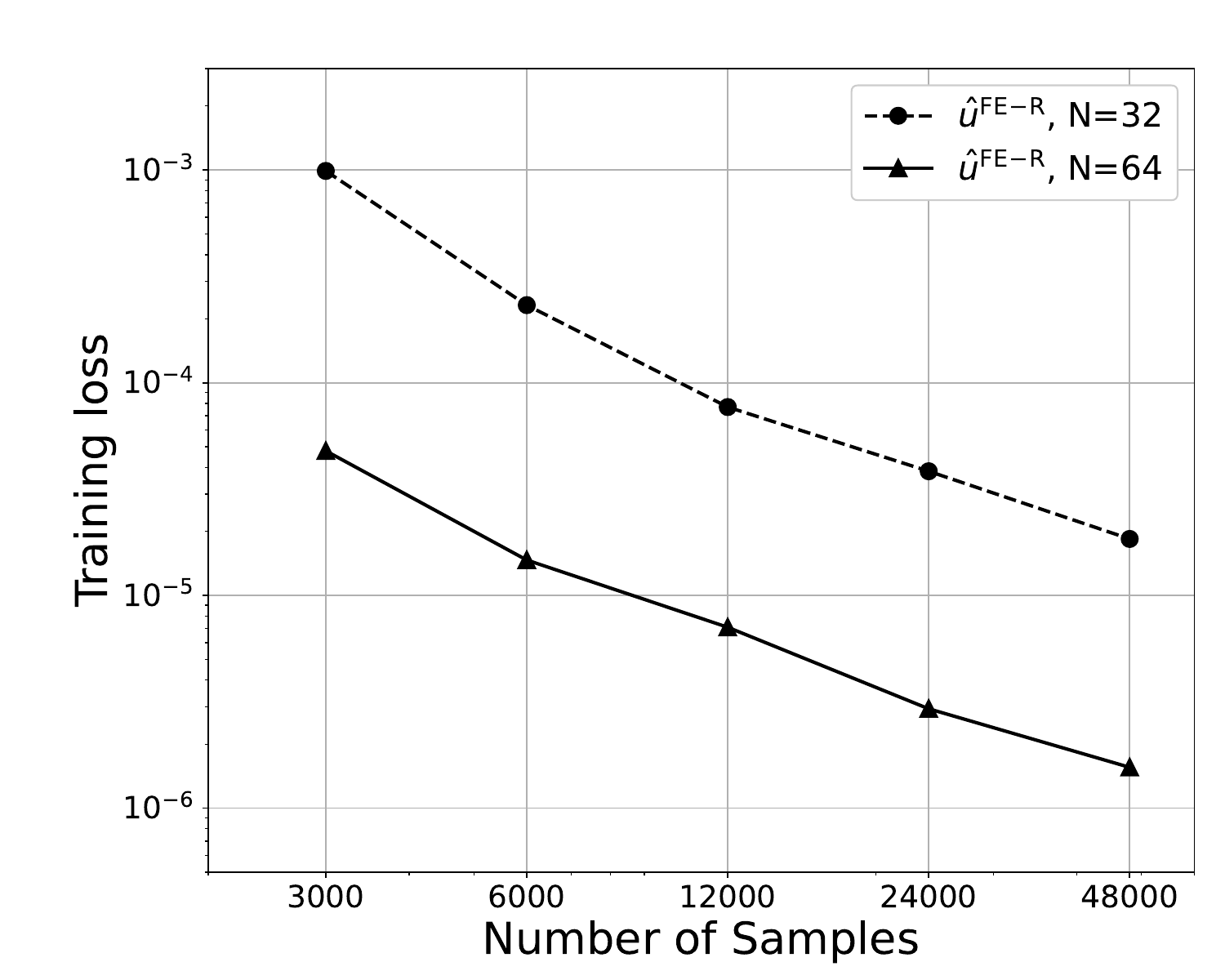}
        \caption{Training loss} \label{fig:generalization_error_train}
    \end{subfigure}
    \caption{Relative $H^1$ errors on the test set and training loss versus the number of training samples.}
    \label{fig:generalization}
\end{figure}

Building on the theoretical result for finite element operator learning methods in \cite{hong2024error}, which states that the generalization error scales as $O(N_s^{-1/2})$,
our results in Figure~\ref{fig:generalization} show a consistent dependence on the sample size.
In our training setting, the training set size is fixed at 3,000 samples.
As a result, the training loss can be reduced to the level required by the stopping criterion, and the training error can recover the optimal convergence behavior under grid refinement.
However, this does not necessarily guarantee the optimal convergence rate on the test set, as also observed in Figures~\ref{fig:ex1} and \ref{fig:error_analysis}.
One key reason is that increasing the grid parameter $N$ enlarges the U-Net architecture and increases the number of trainable parameters.
Therefore, a fixed and relatively small training set may be insufficient to constrain the model appropriately at finer resolutions.

Figure~\ref{fig:generalization} supports this interpretation. 
The training loss decreases consistently as the number of training samples increases for both $N = 32$ and $N = 64$.
However, the test error shows different behaviors depending on the resolution.
For $N=32$, the test error remains nearly unchanged, whereas for $N=64$, it decreases consistently as the number of training samples increases. 
In particular, for sufficiently large sample sizes, the test error at $N = 64$ approaches half of the error level observed at $N=32$, which is consistent with the expected optimal convergence rate.
These results suggest that achieving the optimal convergence rate on the test set requires not only sufficient optimization, as ensured by the stopping criterion, but also a training sample size that grows with the effective problem complexity at larger $N$.
This complexity can be influenced by factors such as the increase in the number of trainable parameters and the conditioning of the underlying FEM system.

\begin{remark}[Comparison with \cite{hong2024error}]
    The different theoretical results (see Remark~\ref{rmk:hong2024}) are also reflected in numerical behavior.
    In \cite{hong2024error}, numerical experiments for a one-dimensional convection-diffusion equation show that the theoretical upper bounds begin to increase even for relatively coarse meshes.
    In contrast, our experiments are conducted on two-dimensional problems with the number of elements scaling as  $(N-1)^2$, yet we observe stable and monotone convergence even on fine grids.
    One plausible factor behind the different empirical behavior is optimization difficulty.
    In \cite{hong2024error}, the number of trainable parameters typically grows with both network width and depth, which can make it harder to drive the training loss to the level needed for stable behavior under mesh refinement.
    In contrast, we use a U-Net architecture without fully connected layers and further reduce the learning complexity via subproblem decomposition, which together improve training stability on fine meshes.
\end{remark}

\subsection{Training efficiency via problem decomposition}

We investigate the training efficiency of the decomposed formulation using FE-CON with rectangular elements (FE-R) as a representative example; similar trends are observed for the model with triangular elements (FE-T).
In this subsection, the training and test sets are constructed differently to evaluate the training efficiency of the decomposed formulation.

We construct two independent training sets corresponding to the two subproblems, one for the pair $(f, \ g_N)$ and the other for $g_D$, each of size 100.
These define a pool of 10,000 possible joint input combinations.
From this pool, 5,000 combinations are sampled to train the original formulation, while the remaining 5,000 disjoint combinations are used as a common test set.
Each subproblem model is trained independently using the corresponding training set of 100 samples and does not observe any joint input combinations during training.
This reflects the structurally simpler learning tasks associated with each subproblem, since each model depends only on a separated component of the input data.

\begin{figure}[t!]
    \centering
    \includegraphics[height=6cm]{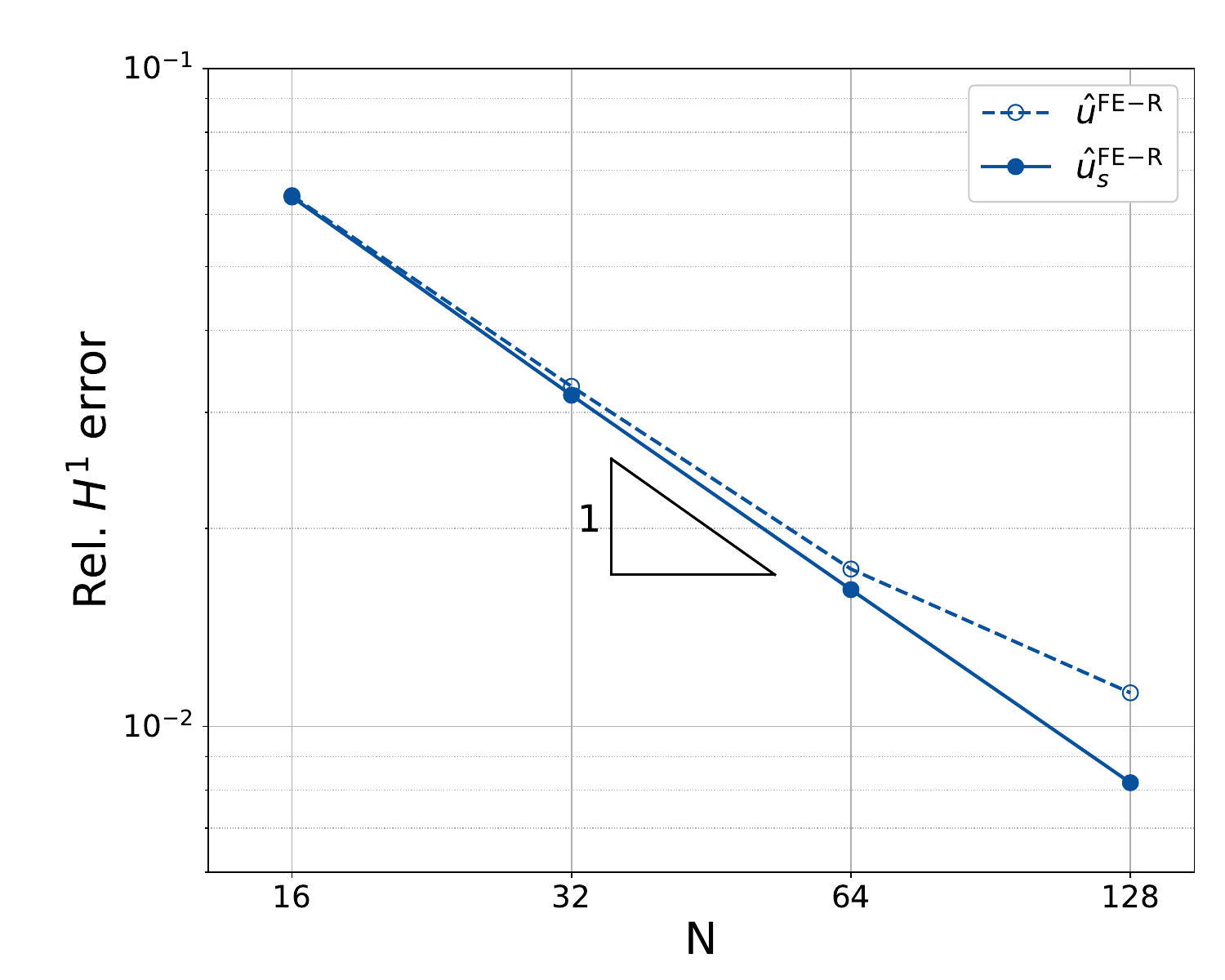}
    \caption{Decomposition-based learning improves data efficiency for FE-CON with rectangular elements (FE-R) by composing independently learned subproblem predictions at inference.}
    \label{fig:decomposition}
\end{figure}

Figure~\ref{fig:decomposition} shows that, despite using significantly fewer training samples, the decomposed formulation consistently achieves lower relative $H^1$ errors across all grid resolutions.
This improvement can be explained by the combinatorial structure induced by decomposition.
Training on independent input sets implicitly covers the joint input combinations generated by their composition.
As a result, the test inputs can be regarded as being effectively included in the training samples of the decomposed formulation.

Importantly, this effect does not apply to the experiments in Section~\ref{ex:performance}, where the test inputs are not implicitly covered by the training samples of the decomposed formulation.
The improved performance in that setting indicates that the benefit of decomposition is not limited to combinatorial coverage. 

These results show that problem decomposition can substantially improve training efficiency by exploiting the compositional structure of the solution operator.
Although the decomposed formulation requires multiple sub-operator models, each subproblem separates one part of the input data and therefore has a simpler input-output structure.
This leads to more stable and reliable training than directly learning the full operator from a large set of training samples.
The decomposed approach learns sub-operators independently and composes them during inference.
As a result, it uses a fixed number of training samples more effectively and provides a practical alternative when training with a large dataset is costly or unstable.

\subsection{Application to complex geometry}\label{subsec:complex_geometry}

We further examine the proposed method on a domain with a more complex geometry.
This experiment differs from that in Section~\ref{ex:performance} in two aspects.
First, we consider a square domain with a square hole,
\begin{equation}\label{eq:complex_domain}
    \Omega = (0,1)^2 \setminus [0.4,0.6]^2.
\end{equation}
Second, nonhomogeneous Dirichlet boundary conditions are imposed on both the outer and inner boundaries.

\begin{figure}[t!]
    \centering
    \includegraphics[height=9cm]{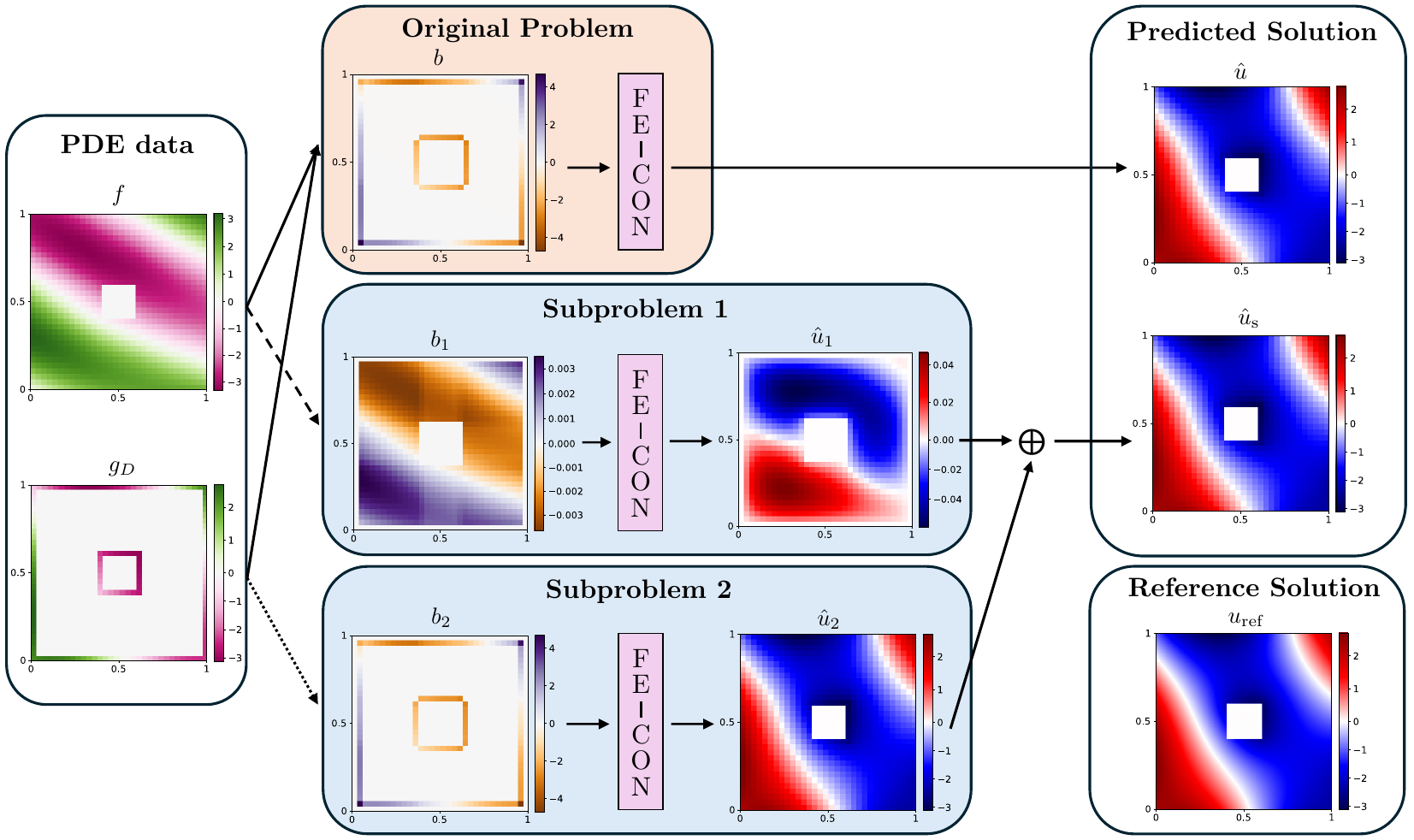}
    \caption{Computational workflow of FE-CON on a complex geometry~\eqref{eq:complex_domain} for $N=32$.}
    \label{fig:workflow_FE}
\end{figure}

Figure~\ref{fig:workflow_FE} illustrates the computational workflow of FE-CON.
In FE-CON, a single-channel image corresponding to the right-hand side vector in the FE system is used as the input.
As shown in the figure, when the magnitudes of $f$ and $g_D$ are comparable, the original problem becomes more difficult to learn.
This difficulty arises from the construction of the input.
According to \eqref{b1}, the contributions of $f$ and $g_N$ are obtained through integrations over the domain $\Omega$ and the Neumann boundary $\partial\Omega_N$, respectively.
Since the basis functions have local support, the resulting integrals decrease as $N$ increases.
In contrast, the integration in \eqref{b2} is independent of the mesh size $h$, and hence it does not decay as $N$ increases.
As a result, $\boldsymbol{b}_2$ becomes dominant for finer grids, and $\boldsymbol{b}$ and $\boldsymbol{b}_2$ appear nearly indistinguishable in Figure~\ref{fig:workflow_FE}.
By decomposing the original problem into subproblems and treating these components separately, the proposed decomposition avoids this unfavorable scaling imbalance.
Consequently, the decomposition yields a more balanced learning problem and leads to improved numerical robustness.

\begin{figure}[t!]
  \centering
  \begin{subfigure}{0.49\linewidth}
    \centering
    \includegraphics[height=6cm]{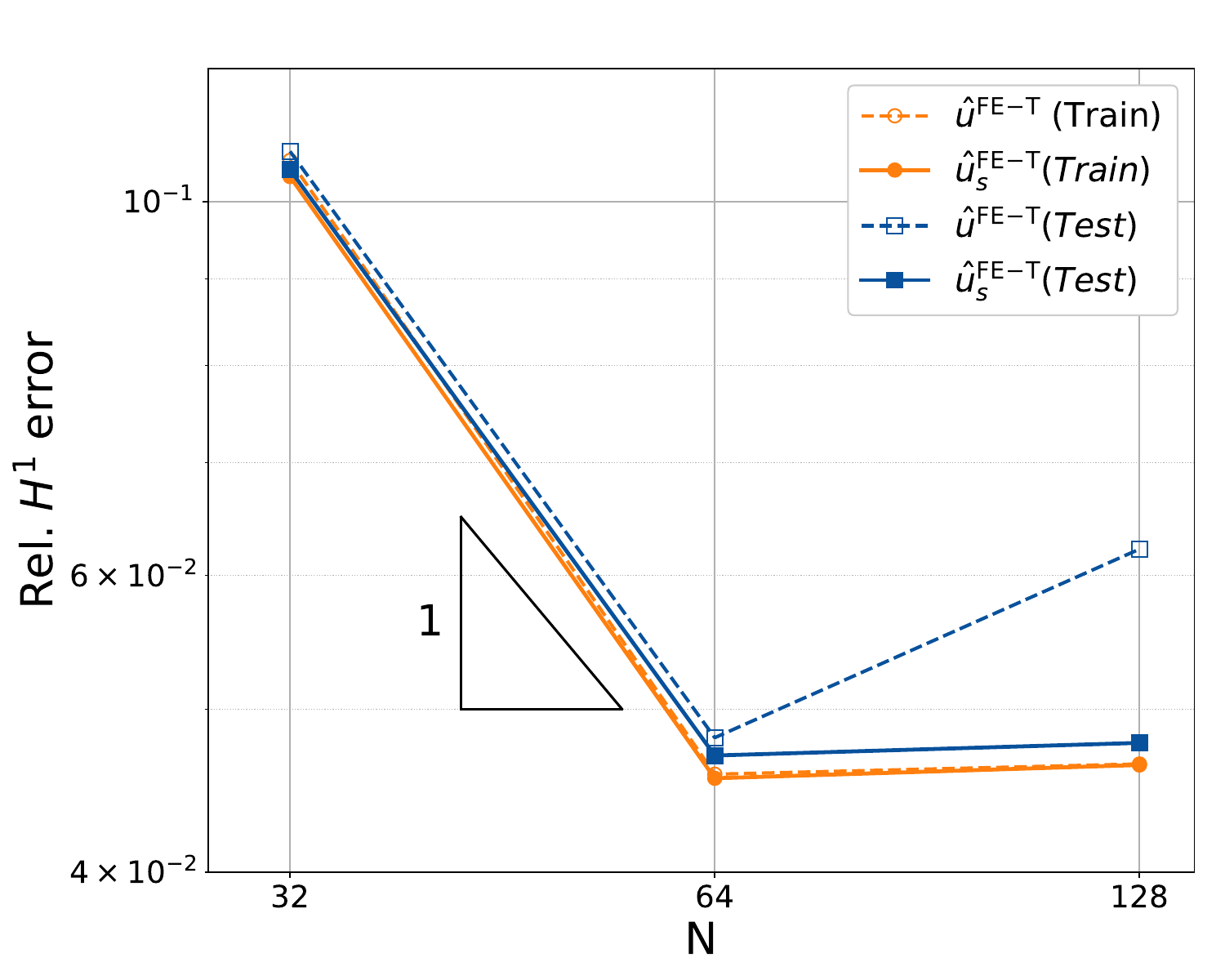}
    \caption{Relative $H^1$ error}
    \label{fig:complex_geometry_test_loss}
  \end{subfigure} 
  \hfill
  \begin{subfigure}{0.49\linewidth}
    \centering
    \includegraphics[height=6cm]{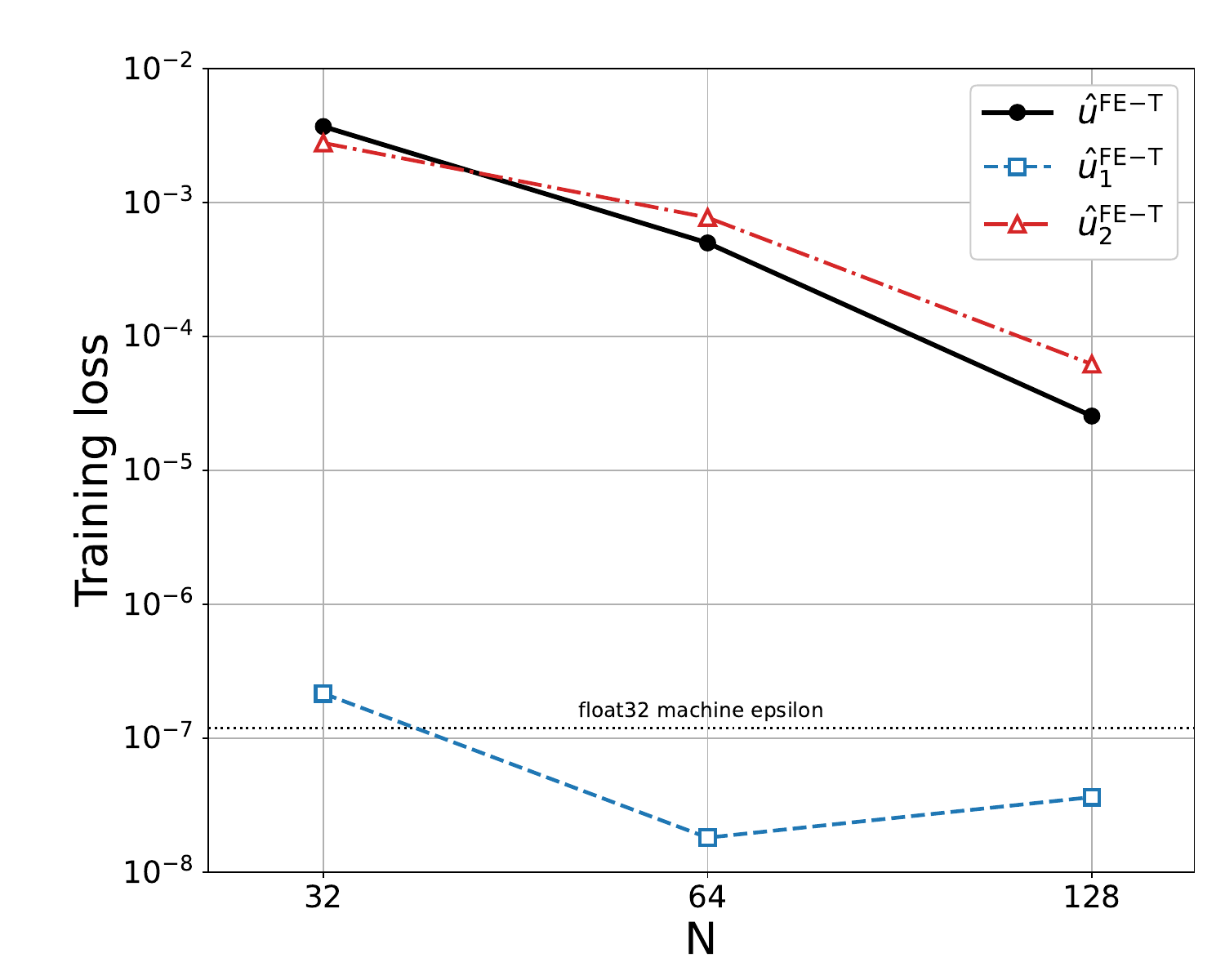}
    \caption{Training loss}
    \label{fig:complex_geometry_train_loss}
  \end{subfigure}
  \caption{Relative $H^1$ error and residual-based training loss across grid sizes on the square-hole domain.}
  \label{fig:complex_geometry}
\end{figure}

Figure~\ref{fig:complex_geometry_test_loss} shows the relative $H^1$ error with respect to the reference finite element solutions.
As $N$ increases from 32 to 64, both the training and test errors decrease for all models.
When $N$ increases further to 128, the decomposed formulation shows only a slight increase in the error, whereas the original formulation exhibits a noticeable increase in the test error.
Figure~\ref{fig:complex_geometry_train_loss} shows the training loss.
For the decomposed formulation, the loss for subproblem 1 reaches the float32 precision limit for $N\ge 64$, which indicates that further reduction is constrained by finite-precision arithmetic.
The loss for subproblem 2 continues to decrease, but this does not lead to further improvement in the $H^1$ error at finer resolutions.

The convergence behavior is also influenced by the regularity of the solution.
Even when the source term and Dirichlet boundary conditions are smooth, the presence of reentrant corners introduces singularities in the solution.
As a result, the solution may not have sufficient regularity for optimal convergence. The observed convergence rate from $N=32$ to $N=64$ exceeds the asymptotic rate, which indicates that the experiment is in a pre-asymptotic regime.
On finer grids, the error becomes dominated by the singularity, and further refinement yields only marginal improvement or slight degradation.

Finally, the increase in the test error of the original formulation at $N = 128$ is primarily attributed to generalization rather than optimization.
The number of training samples is fixed across $N$, while the model complexity increases with $N$.
This can enlarge the generalization gap, as shown in Figure~\ref{fig:ex1}.
This issue can be reduced by increasing the number of training samples at larger $N$, as shown in Figure~\ref{fig:generalization}.

\subsection{Application to Helmholtz equation}\label{subsec:helmholtz}

To assess the applicability of the proposed method beyond the Poisson problem, we consider the Helmholtz equation of the form \cite{mcclenny2020self, wang2021understanding},
\begin{subequations}\label{Helmholtz_problem}
\begin{align}
\Delta u + \kappa^2 u &= f \ \ \quad \text{in } \ \ \Omega \label{Helmholtz_f}, \\
u &= g_D \quad  \text{on } \ \partial\Omega\label{Helmholtz_D},
\end{align}
\end{subequations}
where $\Omega=(0,1)^2$ and $\kappa=1$.
The forcing term is chosen so that the exact solution is
\begin{equation}\label{eq:helmholtz_u}
u(x,y) = \sin(a_1 \pi x)\sin(a_2 \pi y),
\end{equation}
with  $a_1,\ a_2 \in [1,20]$ sampled independently. The Dirichlet boundary condition is prescribed by the trace of the exact solution, $g_D = u|_{\partial\Omega}$. This setting produces solutions with highly oscillatory behavior over a wide range of frequencies.
For evaluation, the exact solution is used in place of $u_h^*$ for computing the relative $H^1$ error.

\begin{figure}[t!]
    \centering
    \includegraphics[height=6cm]{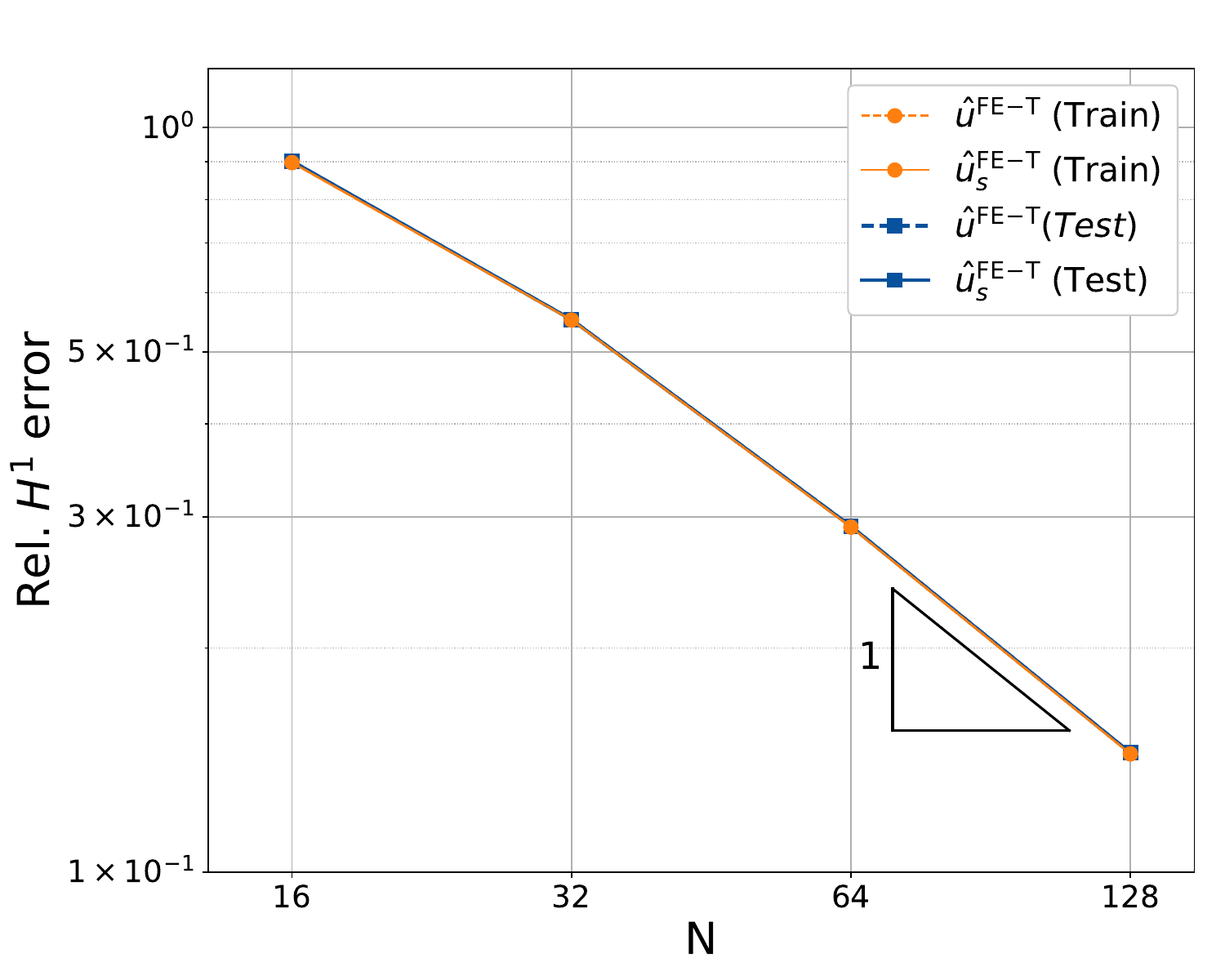}
    \caption{Relative $H^1$ error for the Helmholtz problem \eqref{Helmholtz_problem} under uniform grid refinement.}
    \label{fig:helmholtz_h1}
\end{figure}

Figure~\ref{fig:helmholtz_h1} shows the relative $H^1$ error for the Helmholtz problem under uniform grid refinement.
The errors for both the training and test sets decrease consistently as the grid is refined, and their magnitudes remain comparable across all resolutions.
These results show that the proposed method captures the solution operator without a noticeable generalization gap.

\begin{figure}[t!]
    \centering
    \includegraphics[height=9cm]{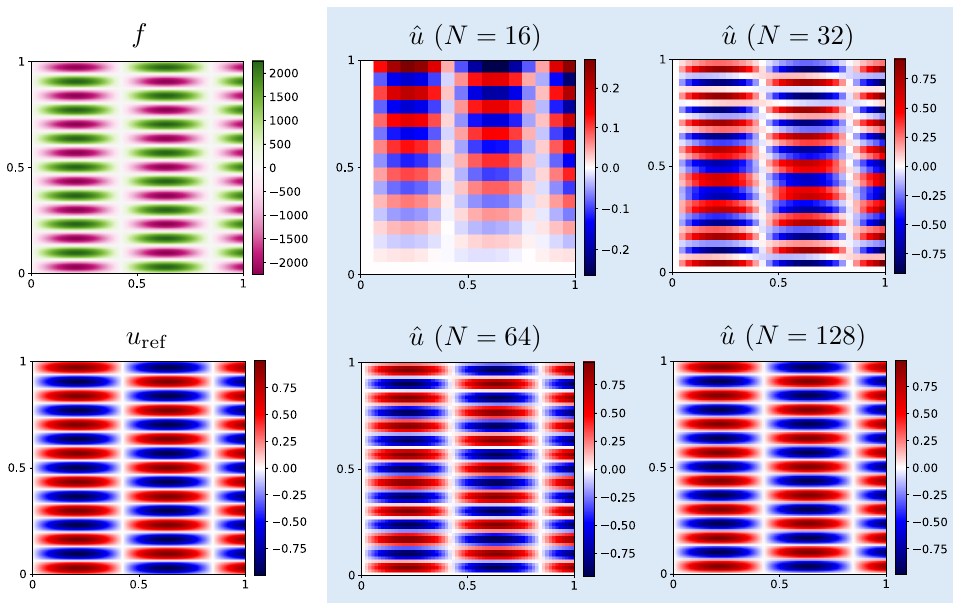}
    \caption{FE-CON approximations for the Helmholtz problem at $N = 16, 32, 64, 128$.}
    \label{fig:profile_Helmholtz}
\end{figure}

Figure~\ref{fig:profile_Helmholtz} shows representative approximate solutions produced by FE-CON for $N = 16, 32, 64, 128$.
The solution exhibits anisotropic oscillatory behavior, with slow variation in the $x$-axis direction and rapid oscillations in the $y$-axis direction.
Such strongly oscillatory features are difficult to resolve on coarse grids.
For $N = 16$ and $N = 32$, the predicted solutions do not accurately capture these oscillations.
As the grid is refined, the quality of the predicted solutions improves significantly.
For $N = 64$ and $N = 128$, the approximations closely match the reference solution and correctly represent the oscillatory structure.
These results indicate that accurate approximation of highly oscillatory solutions requires sufficient spatial resolution.
The present experiment demonstrates that the proposed method maintains its effectiveness under grid refinement and is capable of resolving fine-scale features in Helmholtz-type problems.

\subsection{Extension to nonlinear problem}\label{subsec:ginzburg}
To further examine the applicability of the proposed method beyond linear elliptic equations, we consider the Ginzburg--Landau equation
\begin{subequations}\label{ginzburg_equation}
\begin{align}
-\epsilon\Delta u - u + u^3 &= f \ \quad \text{in } \ \ \Omega \label{ginzburg_f}, \\
u &= 0 \ \quad  \text{on } \ \partial\Omega\label{ginzburg_D},
\end{align}
\end{subequations} where $\Omega=(0,1)^2$. Unlike the linear problems considered in the previous sections, this equation contains a cubic nonlinear term.
Therefore, the decomposition strategy based on linear superposition is not applicable in this setting.
Instead, we train a single FE-CON model by directly minimizing the nonlinear finite element residual.

\begin{figure}[t!]
  \centering
  \begin{subfigure}{0.49\linewidth}
    \centering
    \includegraphics[height=6cm]{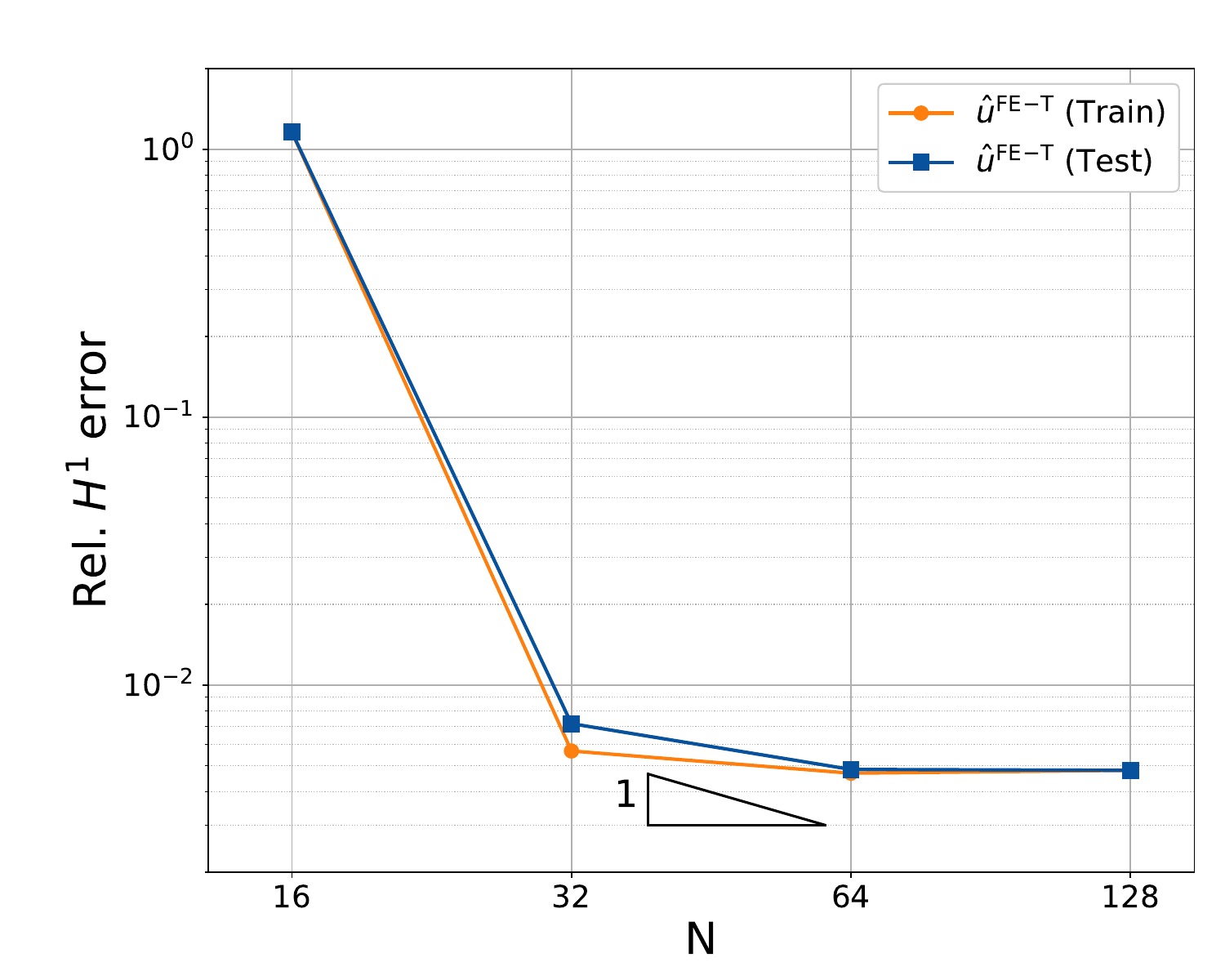}
    \caption{Relative $H^1$ error}
    \label{fig:ginzburg_landau_h1}
  \end{subfigure} 
  \hfill
  \begin{subfigure}{0.49\linewidth}
    \centering
    \includegraphics[height=6cm]{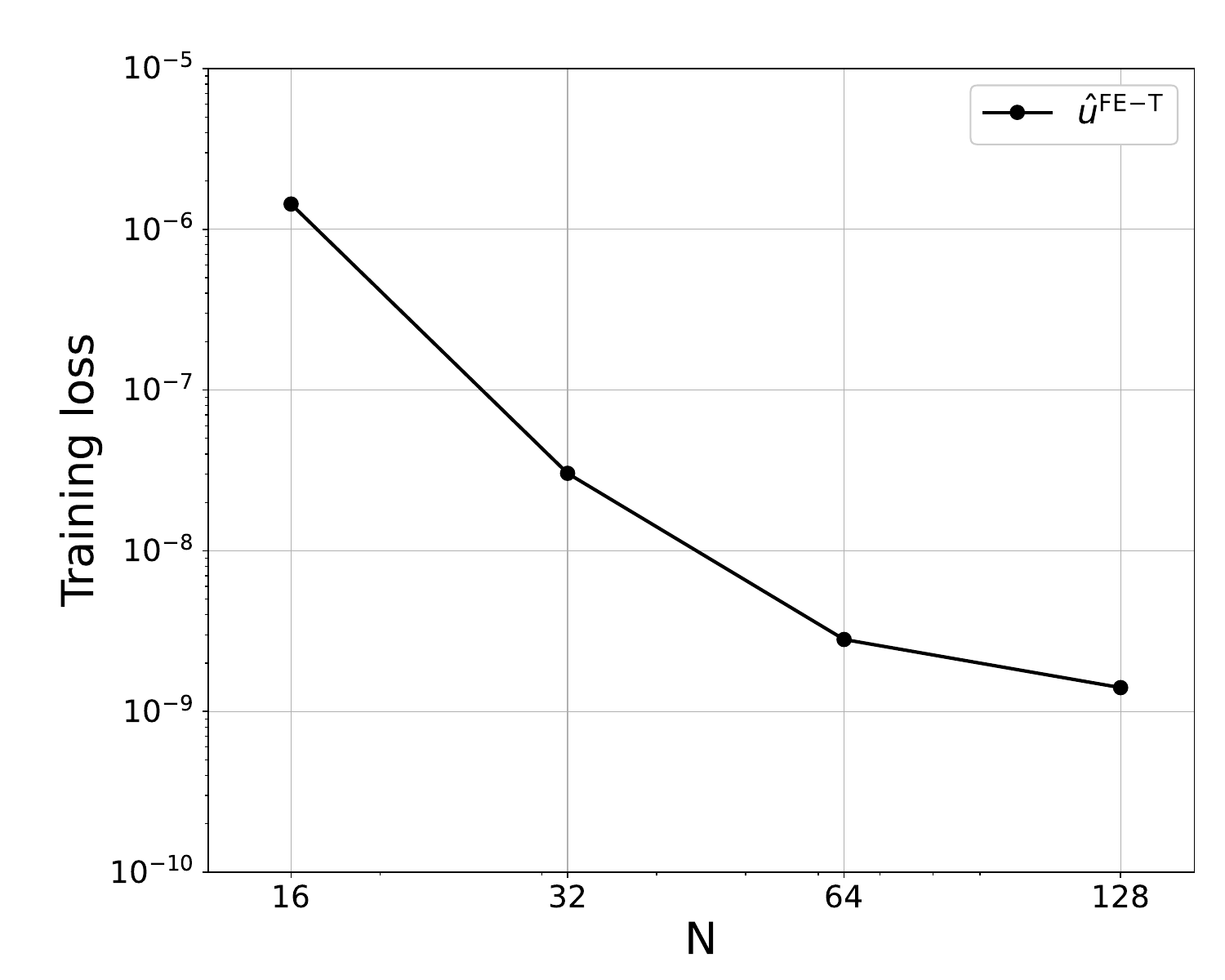}
    \caption{Training loss}
    \label{fig:ginzburg_landau_train_loss}
  \end{subfigure}
  \caption{Relative $H^1$ error and training loss for the Ginzburg--Landau problem under uniform grid refinement.}
  \label{fig:ginzburg_landau}
\end{figure}

In this experiment, we set $\epsilon = 0.01$ and use the same hyperparameters for training as in the previous sections, without enforcing the training strategy for optimal convergence.
Figure~\ref{fig:ginzburg_landau_h1} shows the relative $H^1$ errors under uniform grid refinement. 
The error decreases from $N=16$ to $N=32$ and remains stable for fine grids.
These results demonstrate that the proposed residual-based framework extends naturally to nonlinear problems.

\begin{table}[t]
\centering
\caption{Computational time comparison between the proposed method and the classical FEM solver.
For the Poisson problem, Iter. denotes the number of CG iterations, while for the Ginzburg-Landau problem, it denotes the number of Newton steps (Newt.) and the total CG iterations accumulated over all Newton steps.}
\label{tab:online_runtime_compare}
\renewcommand{\arraystretch}{1.15}

\begin{tabular}{c c ccc ccc}
\toprule
&
&
\multicolumn{3}{c}{Poisson}
&
\multicolumn{3}{c}{Ginzburg--Landau} \\
\cmidrule(lr){3-5}
\cmidrule(lr){6-8}

$N$
& Proposed (ms)
& FEM (ms)
& Speedup
& Iter.
& FEM (ms)
& Speedup
& Iter. (Newt./CG) \\
\midrule

16
& 0.525
& 1.785
& 3.40$\times$
& 51
& 292.3
& 557.1$\times$
& 15 / 314 \\

32
& 0.583
& 3.292
& 5.65$\times$
& 104
& 372.2
& 638.4$\times$
& 21 / 704 \\

64
& 1.165
& 6.672
& 5.73$\times$
& 208
& 417.0
& 357.9$\times$
& 18 / 1370  \\

128
& 5.705
& 21.50
& 3.77$\times$
& 404 
& 2116.0
& 370.9$\times$
& 18 / 2786 \\

\bottomrule
\end{tabular}
\end{table}

We further compare the computational cost of the proposed method with a classical FEM solver.
For linear problems, the FEM solver requires solving a linear system using either a direct method or an iterative method.
For nonlinear problems, the FEM approach requires repeated linearizations, which lead to a sequence of modified linear systems that are solved iteratively.
In contrast, once trained, the proposed method produces the solution through a single forward pass without any iterative procedure.
This leads to a reduction in the computational time.
In this experiment, we use the conjugate gradient (CG) method for linear solves and Newton's method for nonlinear iterations, with tolerances fixed at $10^{-6}$ for both.
The reported computational times are averaged over 1,000 independent runs for each case.
As shown in Table~\ref{tab:online_runtime_compare}, the proposed method reduces computational time compared with the classical FEM solver, especially for the nonlinear problem.
Here, the reported number of CG iterations denotes the total number accumulated over all Newton steps for the Ginzburg-Landau problem, while for the Poisson problem, it denotes the number of CG iterations for a single linear solve.

This property makes the proposed method attractive as a surrogate solver in multilevel frameworks.
In particular, it can be used to approximate coarse-grid solutions in two-grid methods, where fast and repeated evaluations are required.

\section{Conclusion}\label{sec:conclusion}
In this work, we proposed a residual-based convolutional approach for solving partial differential equations.
The method constructs a surrogate solver that directly maps input data to the corresponding solution by minimizing discretized residuals, without requiring paired input-output data.
We established a direct connection between the decay of the training loss and the convergence of the predicted solution.
The analysis provides explicit training criteria to achieve the optimal convergence rate under grid refinement.
These results offer practical guidelines for training residual-based operator models.
From a computational perspective, the proposed method avoids iterative solves and produces solutions through a single forward pass once training is completed.
This reduces computational cost, particularly for nonlinear problems, where classical methods require repeated linearizations and the corresponding solution of linear systems.
This suggests that the proposed method can be used as a surrogate solver in multilevel frameworks, such as two-grid methods.
We also introduced a decomposition strategy that separates the contributions of different input components.
This decomposition improves training efficiency and enhances generalization by enabling the model to learn simpler sub-operators.
Numerical experiments demonstrated that the proposed method is stable and accurate under grid refinement.
It is applicable to complex geometries, and it is capable of resolving fine-scale features in highly oscillatory problems such as the Helmholtz equation.
It is also shown that the decomposed formulation consistently outperforms the original formulation, even when trained with significantly fewer samples.
In addition, the method extends naturally to nonlinear equations.
Future work includes extensions to more complex geometries, higher-order approximations, and broader classes of nonlinear and multiphysics problems.

\bibliographystyle{amsplaindoi}
\bibliography{references}

\newpage
\appendix
\renewcommand{\thetable}{A.\arabic{table}}

\section{Network Architecture}\label{app:architecture}
We use a U-Net-style fully convolutional neural network~\cite{ronneberger2015u}.
The network maps input images to output fields defined on the same spatial grid.

\begin{figure}
    \centering
    \includegraphics[height=6.5cm]{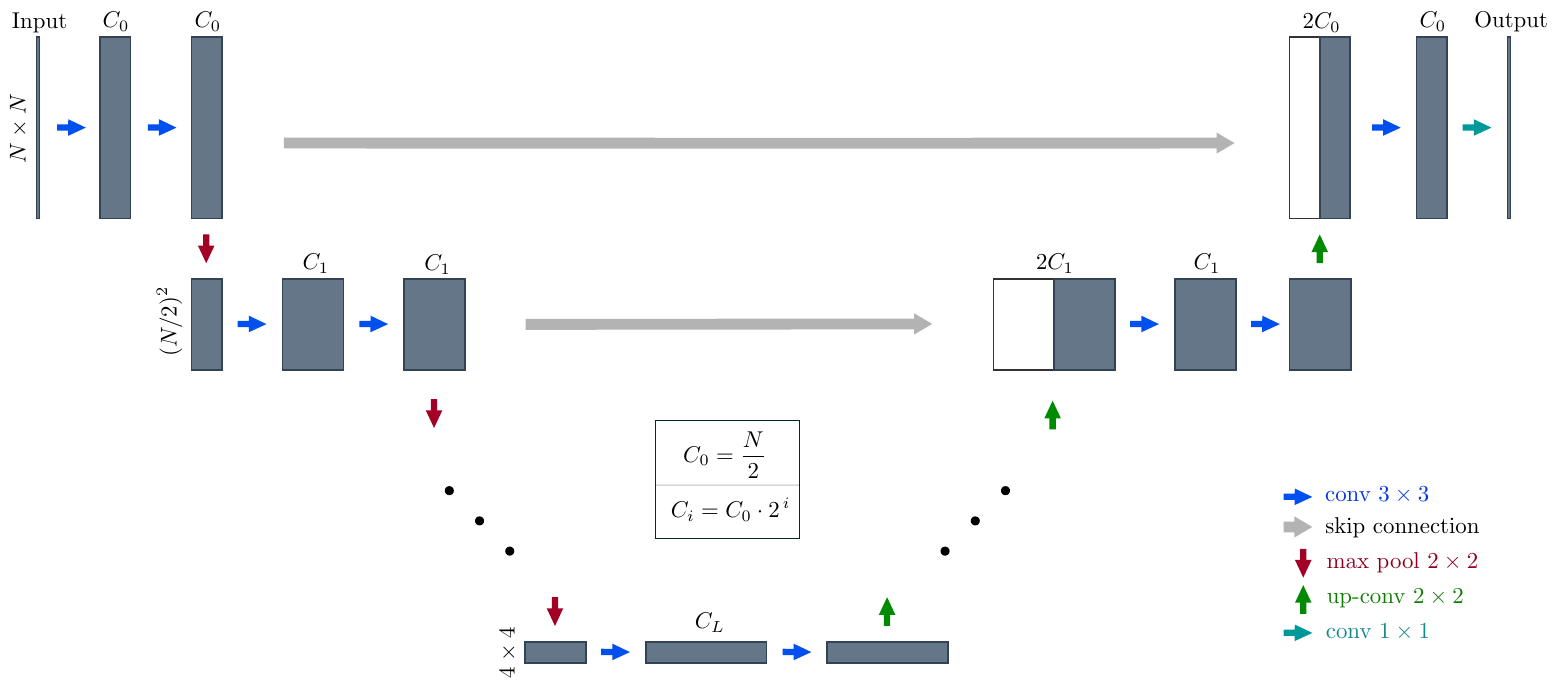}
    \caption{U-Net-style fully convolutional network architecture used in the experiments.}
    \label{fig:architecture}
\end{figure}

Figure~\ref{fig:architecture} shows the network architecture.
The architecture consists of an encoder, a bottleneck, and a decoder.
The encoder contains $L$ blocks, where each block applies two $3\times3$ convolutional layers with ReLU activations followed by a $2\times2$ max-pooling operation.
The number of channels at level $i=0,\dots,L-1$ is given by
$$C_i = C_0 \cdot 2^{\,i}.$$
At the bottleneck, two convolutional layers with channel dimension $C_0 \cdot 2^{L}$ are applied.
The decoder reflects the encoder. 
At each level, feature maps are upsampled using $2\times2$ transposed convolution, concatenated with the corresponding encoder features via skip connections, and processed by two $3\times3$ convolution layers with ReLU activations.
A final $1\times1$ convolution produces a single-channel output.
The model capacity is scaled with the grid resolution $N$ according to
$$C_0 = \frac{N}{2}, \qquad
N = 2^{L+2}, \qquad L \in \{2, 3, 4, 5\},$$
so that the spatial resolution at the bottleneck layer is $4 \times 4$ for all grid resolutions considered.

\section{Finite Difference Discretizations}
In this section, we briefly review a standard finite difference discretization on a uniform grid.
For simplicity, we consider a uniform grid with $x_i < x_{i+1}$, $y_j < y_{j+1}$, and $h := x_{i+1} - x_i = y_{j+1} - y_j$.
Let $u_{i,j} = u(x_i, y_j)$. 
Then, finite difference approximations to the Laplacian $\Delta$ are given by:
\begin{align}
    \text{5-point stencil:} & \quad \Delta_h u_{i,j} = \frac{u_{i-1,j} + u_{i+1,j} + u_{i,j-1} + u_{i,j+1} - 4u_{i,j}}{h^2} \label{fd_5}, \\[2ex]
    \text{9-point stencil:} & \quad \Delta_h u_{i,j} = \frac{u_{i-1,j-1} + u_{i+1,j-1} + u_{i-1,j+1} + u_{i+1,j+1}}{6 h^2} - \frac{20 u_{i,j}}{6 h^2} \nonumber \\
    & \hspace{2cm} + \frac{4u_{i,j-1} + 4u_{i-1,j} + 4u_{i+1,j} +  4u_{i,j+1}}{6 h^2}. \label{fd_9}
\end{align}
For first-order partial derivatives on the Neumann boundary, we use the following forward or backward differences depending on the boundary location:
\begin{subequations} \label{fd_1st}
\begin{align}
    \partial_x^+ u(x_i,y_j) &= \frac{u_{i+1,j} - u_{i,j}}{h}, \qquad\partial_x^- u(x_i,y_j) = \frac{u_{i,j} - u_{i-1,j}}{h}, \\
    \partial_y^+ u(x_i,y_j) &= \frac{u_{i,j+1} - u_{i,j}}{h}, \qquad \partial_y^- u(x_i,y_j) = \frac{u_{i,j} - u_{i,j-1}}{h}.
\end{align}
\end{subequations}
For a boundary node $(x_i, y_j)$, we use $\partial_x^+ u$ if the interior neighbor in the x-direction is at index $i+1$; we use $\partial_x^- u$ if the interior neighbor is at index $i-1$.
The same rule applies in the $y$-direction.
With $\nabla_h u = (\partial_x^\pm u, \partial_y^\pm u)$, the finite difference method is to find $u_h$ such that
\begin{subequations}
\begin{align}
    -\Delta_h u_h(x_i, y_j) &= f(x_i, y_j) \qquad  \hskip+6pt \forall (x_i, y_j) \in \Omega\setminus\partial\Omega, \\
    u_h(x_i, y_j) &= g_D(x_i, y_j) \qquad \forall (x_i, y_j) \in \partial\Omega_D, \\
    \nabla_h u_h(x_i, y_j)\cdot\boldsymbol{n} &= g_N(x_i, y_j) \qquad \forall (x_i, y_j) \in \partial\Omega_N. \label{fd_neumann}
\end{align}
\end{subequations}

\section{FD-CON: Finite Difference Convolutional Operator Network}\label{subsec:FDUNet}

For completeness, we also present the finite difference convolutional operator network (FD-CON), which serves as a finite difference counterpart of FE-CON. Similar to FE-CON, FD-CON learns the solution operator by minimizing a physics-based residual, but uses a finite difference discretization instead of a finite element formulation.
The FD-CON approach approximates solutions of Poisson equations by minimizing a discretized strong-form residual.
In this formulation, the model is trained to satisfy both the PDE and boundary conditions simultaneously through a combination of residual-based loss terms.
The inputs to the network are provided as image channels. As shown in Figure~\ref{fig:fdm_input_channels}, the source term $f$ is encoded as a dense pixel-wise field over the entire domain $\Omega$. In contrast, the Dirichlet and Neumann boundary conditions, $g_D$ and $g_N$, are represented as separate input channels whose pixel values can be nonzero only on their respective boundary locations and are set to zero in the interior of the domain.
The network predicts the solution over the entire domain.
After training, the predicted solution is post-processed by replacing the values on the Dirichlet boundary with the given boundary condition, as illustrated in Figure~\ref{fig:fdm_input_channels}.
Therefore, the final solution satisfies the Dirichlet boundary condition exactly while retaining the network prediction in the interior and on Neumann boundaries.

\begin{figure}[t!]
\centering
\begin{subfigure}[t]{0.25\textwidth}
\centering
\begin{tikzpicture}[scale=0.4]
\foreach \x in {0,...,7} {
  \foreach \y in {0,...,7} {
    \fill[blue!35] (\x,\y) rectangle ++(1,1);
  }
}
\draw[step=1cm,black,thick] (0,0) grid (8,8);
\draw[black, line width = 1pt] (0,0)--(8,0)--(8,8)--(0,8)--(0,0)--(1,0);
\end{tikzpicture}
\caption{Source term $f$}
\label{fig:fdm_input_f}
\end{subfigure} \hspace{-.3cm}
\begin{subfigure}[t]{0.25\textwidth}
\centering
\begin{tikzpicture}[scale=0.4]
\foreach \x in {0,...,7} {
  \fill[red!35] (\x,0) rectangle ++(1,1);
  \fill[red!35] (\x,7) rectangle ++(1,1);
}
\foreach \y in {0,...,7} {
  \fill[red!35] (7,\y) rectangle ++(1,1);
}
\draw[step=1cm,black,thick] (0,0) grid (8,8);
\draw[black, line width = 1pt] (0,0)--(8,0)--(8,8)--(0,8)--(0,0)--(1,0);
\end{tikzpicture}
\caption{Dirichlet B.C. $g_D$}
\label{fig:fdm_input_gD}
\end{subfigure} \hspace{-.3cm}
\begin{subfigure}[t]{0.25\textwidth}
\centering
\begin{tikzpicture}[scale=0.4]
\foreach \y in {0,...,7} {
  \fill[green!35] (0,\y) rectangle ++(1,1);
}
\draw[step=1cm,black,thick] (0,0) grid (8,8);
\draw[black, line width = 1pt] (0,0)--(8,0)--(8,8)--(0,8)--(0,0)--(1,0);

\end{tikzpicture}
\caption{Neumann B.C. $g_N$}
\label{fig:fdm_input_gN}
\end{subfigure} \hspace{-.3cm}
\begin{subfigure}[t]{0.25\textwidth}
\centering
\begin{tikzpicture}[scale=0.4]

\foreach \x in {0,...,7} {
  \foreach \y in {0,...,7} {
    \fill[red!35] (\x,\y) rectangle ++(1,1);
  }
}

\foreach \x in {0,...,6} {
  \foreach \y in {1,...,6} {
    \fill[yellow!30] (\x,\y) rectangle ++(1,1);
  }
}

\draw[step=1cm,black,thick] (0,0) grid (8,8);
\draw[black, line width = 1pt] (0,0)--(8,0)--(8,8)--(0,8)--(0,0)--(1,0);

\end{tikzpicture}
\caption{Predicted solution $\hat{u}$}
\label{fig:fdm_output}
\end{subfigure}

\caption{Input channels (a)--(c) and the predicted solution (d) for FD-CON. The source term $f$ is provided over the entire domain, while the Dirichlet and Neumann boundary conditions are encoded as separate channels. The predicted solution is post-processed to enforce the Dirichlet boundary condition.}
\label{fig:fdm_input_channels}
\end{figure}

For computational efficiency, we compute the finite difference Laplacian using convolutional kernels.
The classical 5-point stencil \eqref{fd_5} and 9-point stencil \eqref{fd_9} for the discrete Laplacian can be expressed as 2D convolutions with fixed kernels.
For example, the discrete Laplacian with the 5-point stencil can be represented as follows:
\[
\Delta_h u_{i,j} = \underbrace{\frac{1}{h^2}}_{=: \alpha_5}
\underbrace{\begin{bmatrix}
0 & 1 & 0 \\
1 & -4 & 1 \\
0 & 1 & 0
\end{bmatrix}}_{=: K_5}
* 
\underbrace{\begin{bmatrix}
u_{i-1,j-1} & u_{i-1,j} & u_{i-1,j+1} \\
u_{i,j-1}   & u_{i,j}   & u_{i,j+1} \\
u_{i+1,j-1} & u_{i+1,j} & u_{i+1,j+1}
\end{bmatrix}}_{=: U(i,j)},
\]
where $U(i,j)$ denotes the $3\times 3$ local patch centered at $(i,j)$, and $*$ denotes the discrete convolution operator used in standard CNN implementations.
Similarly, for the 9-point stencil,
\[
\Delta_h u_{i,j} = \underbrace{\frac{1}{6h^2}}_{=: \alpha_9}
\underbrace{\begin{bmatrix}
1 & 4 & 1 \\
4 & -20 & 4 \\
1 & 4 & 1
\end{bmatrix}}_{=:K_9}
* 
\begin{bmatrix}
u_{i-1,j-1} & u_{i-1,j} & u_{i-1,j+1} \\
u_{i,j-1}   & u_{i,j}   & u_{i,j+1} \\
u_{i+1,j-1} & u_{i+1,j} & u_{i+1,j+1}
\end{bmatrix}.
\]
Then, the discrete Laplacian at $(i,j)$ can be written in the general form
\[
\Delta_h u_{i,j} = \alpha_s \, (K_s * U(i,j)), \qquad s\in\{5, \ 9\}
\]
where $s$ denotes the stencil size.
This formulation provides a unified view of finite difference discretizations and convolutional operators, which we exploit in our model design.

\begin{figure}[t!]
\centering

\begin{subfigure}[t]{0.3\textwidth}
\centering
\begin{tikzpicture}[scale=0.45]
\foreach \x in {1,...,6} {
  \foreach \y in {1,...,6} {
    \fill[blue!35] (\x,\y) rectangle ++(1,1);
  }
}
\draw[step=1cm,black,thick] (0,0) grid (8,8);
\draw[black, line width = 1pt] (0,0)--(8,0)--(8,8)--(0,8)--(0,0)--(1,0);
\end{tikzpicture}
\caption{$M_f$}
\label{fig:interior_masking}
\end{subfigure}
\hfill
\begin{subfigure}[t]{0.3\textwidth}
\centering
\begin{tikzpicture}[scale=0.45]
\foreach \x in {0,...,7} {
  \fill[red!35] (\x,0) rectangle ++(1,1);
  \fill[red!35] (\x,7) rectangle ++(1,1);
}
\foreach \y in {0,...,7} {
  \fill[red!35] (7,\y) rectangle ++(1,1);
}
\draw[step=1cm,black,thick] (0,0) grid (8,8);
\draw[black, line width = 1pt] (0,0)--(8,0)--(8,8)--(0,8)--(0,0)--(1,0);
\end{tikzpicture}
\caption{$M_{D}$}
\label{fig:diri_masking}
\end{subfigure}
\hfill
\begin{subfigure}[t]{0.3\textwidth}
\centering
\begin{tikzpicture}[scale=0.45]
\foreach \y in {0,...,7} {
  \fill[green!35] (0,\y) rectangle ++(1,1);
}
\draw[step=1cm,black,thick] (0,0) grid (8,8);
\draw[black, line width = 1pt] (0,0)--(8,0)--(8,8)--(0,8)--(0,0)--(1,0);

\end{tikzpicture}
\caption{$M_N$}
\label{fig:neu_masking}
\end{subfigure}

\caption{Masks for source term $M_f$, Dirichlet boundary $M_D$, and Neumann boundary $M_N$}
\label{fig:fdm_input_masks}
\end{figure}

To enforce the governing PDE over the interior of the domain, we define the interior loss by applying a convolution kernel \( K_s \) to the predicted solution \( \hat{u}^k \) and comparing it with the scaled source term \( f^k \). The loss is evaluated only over a masked region $M_f$, which excludes boundary pixels (see Figure~\ref{fig:interior_masking}):

\begin{equation} \label{eq:interior_loss}
\mathcal{L}_{f}^k = \frac{1}{|M_f|} \sum_{(i,j)\in M_f} \left| K_s * \hat{U}^k(i,j) + \frac{1}{\alpha_s} f_{i,j}^k \right|^2,
\end{equation}
where $|M_f|$ is the number of blue pixels in $M_f$ and $\hat{U}^k(i,j)$ denotes the $3\times3$ local patch extracted from the predicted solution $\hat{u}^k$ centered at $(i,j)$.
The scaling factor $\alpha_s$ increases quadratically as the grid spacing $h$ decreases. Incorporating such a large factor directly into the convolution kernel can lead to numerically unstable gradients and grid-dependent output scales. To mitigate this, we apply the scaling to the source term $f$ instead and use the unscaled kernel $K_s$ during convolution.

To enforce the Dirichlet boundary condition, we define the Dirichlet loss by comparing the predicted solution $\hat{u}$ with the prescribed boundary data $g_D$. The loss is computed only over a masked region $M_D$, corresponding to the Dirichlet boundary (see Figure~\ref{fig:diri_masking}):
\begin{equation} \label{eq:dirichlet_loss}
\mathcal{L}_{D}^k = \frac{1}{|M_D|} \sum_{(i,j)\in M_D} \left| \hat{u}_{i,j}^k - g_{Di,j}^k \right|^2,
\end{equation}
where $|M_D|$ is the number of red pixels in $M_D$.

Since we assume that the Neumann boundary is the left boundary of the domain, the discrete normal derivative is approximated by a first-order finite difference:

\[
\nabla_h u(x_i,y_j)\cdot\boldsymbol{n} = -\frac{u_{i+1,j} - u_{i,j}}{h}.
\]
Then the loss for the Neumann boundary is given by:
\begin{equation} \label{eq:neumann_loss}
\mathcal{L}_{N}^k = \frac{1}{|M_N|} \sum_{(i,j)\in M_N} \left| \hat{u}_{i+1, j}^k - \hat{u}_{i,j}^k + h\,g_{Ni,j}^k \right|^2,
\end{equation}
where $|M_N|$ is the number of green pixels in $M_N$ (see Figure~\ref{fig:neu_masking}).
As with the interior loss, we avoid including the $1/h$ factor inside the difference and instead scale $g_N$ by $h$ to ensure consistent loss magnitudes across different resolutions.

The total loss is a weighted combination of the three terms:
\begin{equation}\label{fd_loss}
    \mathcal{L}_{\text{FDM}} = \frac{1}{N_s} \sum_{k=1}^{N_s}\left( \lambda_{f} \mathcal{L}_{f}^k + \lambda_{D} \mathcal{L}_{D}^k + \lambda_{N} \mathcal{L}_{N}^k\right),
\end{equation}
where $\lambda_{f}, \lambda_{D}$ and $\lambda_{N}$ are hyperparameters, and $N_s$ is the number of samples.

In order to train the models for the subproblems \eqref{Subproblem1} and~\eqref{Subproblem2},
each model is trained using only the input channels corresponding to nonzero components.
More specifically, for Subproblem 1, a two-channel input corresponding to $f$ and $g_N$ (see Figures~\ref{fig:fdm_input_f} and \ref{fig:fdm_input_gN}) is used, while $g_D = 0$ is enforced in \eqref{eq:dirichlet_loss}.
For Subproblem 2, a single-channel input corresponding to $g_D$ (see Figure~\ref{fig:fdm_input_gD}) is used, while $f = 0$ and $g_N = 0$ are enforced in \eqref{eq:interior_loss} and~\eqref{eq:neumann_loss}, respectively.
When a model predicts the solution, the values on the Dirichlet boundary are post-processed by overwriting with the exact Dirichlet data (see Figure~\ref{fig:fdm_output}). The boundary values are set using $g_D$ for both the original problem and Subproblem 2, while the values are set to zero for Subproblem 1.

\subsection{Error Analysis}

Let $\mathcal{V}_I$ denote the set of interior grid nodes and $\mathcal{V}_B$ the set of boundary grid nodes.
We reorder the nodes so that interior nodes come first, followed by boundary nodes.
Accordingly, a grid function is written as
\begin{equation}\label{grid_fn}
\boldsymbol{v} = \left[\begin{array}{c} \boldsymbol{v}_I \\ \boldsymbol{v}_B\end{array}\right] \in \mathbb{R}^{|\mathcal{V}_I|+|\mathcal{V}_B|},
\end{equation}
where $\boldsymbol{v}_I \in \mathbb{R}^{|\mathcal{V}_I|}$, $\boldsymbol{v}_B \in \mathbb{R}^{|\mathcal{V}_B|}$.
Let $A_I \in \mathbb{R}^{|\mathcal{V}_I|\times |\mathcal{V}_I|}$ denote the standard finite difference matrix acting on interior nodes only, obtained from any standard symmetric stencil (e.g., 5-point or 9-point) with homogeneous Dirichlet boundary conditions enforced by elimination.
Then $A_I$ is symmetric positive definite.
We use the standard scaling $A_I\sim \alpha_s \widetilde{A}_I$, where $\widetilde{A}_I$ is independent of $h$.
Under this convention, the smallest eigenvalue of $A_I$ is bounded away from zero uniformly in $h$, while the largest eigenvalue grows like $O(h^{-2})$.

Let us define the (weighted) discrete $L^2$ inner product and norm on interior node vectors by
\begin{equation}\label{discrete_l2}
    (\boldsymbol{v}_I, \boldsymbol{w}_I)_h := h^2 \boldsymbol{v}_I^T \boldsymbol{w}_I, \qquad \|\boldsymbol{v}_I\|^2_{0,h} := (\boldsymbol{v}_I, \boldsymbol{v}_I)_h.
\end{equation}
The discrete $H^1$-seminorm is defined as
\begin{equation}\label{discrete_H1}
    |\boldsymbol{v}_I|^2_{1,h} := h^2 \sum_{(i,j) \in \mathcal{V}_I} |\nabla_h v_{i,j}|^2.
\end{equation}
For homogeneous Dirichlet boundary conditions, this seminorm is computed using the interior values with boundary values understood as zero.
It is well known that the associated discrete bilinear form is equivalent to this seminorm:
$$|\boldsymbol{v}|_{1,h}^2 \simeq h^2 \boldsymbol{v}_I^T A_I \boldsymbol{v}_I.$$
More precisely, there exist positive constants $c_I$ and $C_I$, independent of $h$, such that for all interior vectors $\boldsymbol{v}_I$,
\begin{equation}\label{discrete_norm_eq}
    c_I |\boldsymbol{v}_I|_{1,h}^2 \le h^2 \boldsymbol{v}_I^T A_I \boldsymbol{v}_I \le C_I |\boldsymbol{v}_I|_{1,h}^2.
\end{equation}
For a single input, the FD-CON loss functions \eqref{eq:interior_loss} and \eqref{eq:dirichlet_loss} can be written as
\begin{equation}\label{fd_loss_err}
    \mathcal{L}_f = \frac{1}{|\mathcal{V}_I|} \left\|\widetilde{A}_I \hat{\boldsymbol{u}}_I - \frac{1}{\alpha_s}\boldsymbol{f}_I\right\|^2_2, \qquad \mathcal{L}_D = \frac{1}{|\mathcal{V}_B|} \|\hat{\boldsymbol{u}}_B\|^2_2,
\end{equation}
where $\widetilde{A}_I = \alpha_s^{-1} A_I$, $\boldsymbol{f}_I\in \mathbb{R}^{|\mathcal{V}_I|}$ and $f_j = f(\eta_j), \ \forall\eta_j \in \mathcal{V}_I$. Then, we have the following error estimate.

\begin{theorem}
    Let $u \in H^2(\Omega) \cap H^1_D(\Omega)$ be the weak solution, and let $\hat{\boldsymbol{u}}$ be the FD-CON prediction.
    Then there exist positive constants $C_1$, $C_2$, and $C_3$, independent of $h$, such that
    \begin{equation}
        |\boldsymbol{u}_I - \hat{\boldsymbol{u}}_I|_{1,h}^2 + h^{-1}\|\boldsymbol{u}_B - \hat{\boldsymbol{u}}_B\|^2_{0,h} \le C_1 h^2 \|u\|^2_{H^2(\Omega)} + C_2 h^{-4}\mathcal{L}_f + C_3 \mathcal{L}_D,
    \end{equation}
    where $\boldsymbol{u} = [\boldsymbol{u}_I^T, \boldsymbol{u}_B^T]^T$ is the reordered grid function \eqref{grid_fn} corresponding to the weak solution, i.e., $u_j = u(\eta_j), \ \eta_j \in \mathcal{V}_I \cup \mathcal{V}_B, \ j = 1, ..., N^2$.
\end{theorem}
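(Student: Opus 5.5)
Since $\boldsymbol{u}_B = 0$ (homogeneous Dirichlet data), the boundary term on the left equals $h^{-1}\|\hat{\boldsymbol{u}}_B\|_{0,h}^2 = h\,\|\hat{\boldsymbol{u}}_B\|_2^2$. Because $|\mathcal{V}_B| = O(h^{-1})$, this is bounded by a constant times $\frac{1}{|\mathcal{V}_B|}\|\hat{\boldsymbol{u}}_B\|_2^2 = \mathcal{L}_D$, which yields the $C_3\mathcal{L}_D$ term. The remaining work concerns the interior part. Let $\boldsymbol{u}^*_I$ be the finite difference solution, $A_I\boldsymbol{u}^*_I = \boldsymbol{f}_I$. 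By the triangle inequality,
\[
|\boldsymbol{u}_I-\hat{\boldsymbol{u}}_I|_{1,h}^2 \le 2|\boldsymbol{u}_I-\boldsymbol{u}^*_I|_{1,h}^2 + 2|\boldsymbol{u}^*_I-\hat{\boldsymbol{u}}_I|_{1,h}^2 .
\]
For the first term I would invoke the standard discrete $H^1$ convergence of the 5-/9-point scheme for $u\in H^2$, namely $|\boldsymbol{u}_I-\boldsymbol{u}^*_I|_{1,h}\le Ch\|u\|_{H^2}$. This follows from the truncation error being bounded in a negative discrete norm by a Bramble--Hilbert argument, combined with the stability of $A_I$ in $|\cdot|_{1,h}$. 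This gives the $C_1h^2\|u\|_{H^2}^2$ term.

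For the second term, set $\boldsymbol{e}=\boldsymbol{u}^*_I-\hat{\boldsymbol{u}}_I$ and $\boldsymbol{r}=\boldsymbol{f}_I-A_I\hat{\boldsymbol{u}}_I = A_I\boldsymbol{e}$. Using \eqref{discrete_norm_eq} and Cauchy--Schwarz, I would write
\[
c_I|\boldsymbol{e}|_{1,h}^2\le h^2\boldsymbol{e}^TA_I\boldsymbol{e}=h^2\boldsymbol{e}^T\boldsymbol{r}\le h^2\|\boldsymbol{e}\|_2\|\boldsymbol{r}\|_2 .
\]
Next I would use the discrete Poincar\'e inequality $\|\boldsymbol{e}\|_{0,h}\le C_P|\boldsymbol{e}|_{1,h}$, that is $\|\boldsymbol{e}\|_2\le C_Ph^{-1}|\boldsymbol{e}|_{1,h}$. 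Dividing through gives $|\boldsymbol{e}|_{1,h}\le C h\|\boldsymbol{r}\|_2$. Since $\boldsymbol{r}=-\alpha_s\bigl(\widetilde A_I\hat{\boldsymbol{u}}_I-\alpha_s^{-1}\boldsymbol{f}_I\bigr)$, we have $\|\boldsymbol{r}\|_2^2=\alpha_s^2|\mathcal{V}_I|\mathcal{L}_f$, and hence
\[
|\boldsymbol{e}|_{1,h}^2\le Ch^2\alpha_s^2|\mathcal{V}_I|\mathcal{L}_f\sim h^2\cdot h^{-4}\cdot h^{-2}\mathcal{L}_f = Ch^{-4}\mathcal{L}_f,
\]
which matches the stated $C_2h^{-4}\mathcal{L}_f$.

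The main subtlety is the interaction between the network's boundary values and the interior residual. The FD-CON loss \eqref{fd_loss_err} is written with elimination, so the interior stencil only sees interior unknowns. If instead the stencil uses the predicted $\hat{\boldsymbol{u}}_B$ (which is nonzero before post-processing), then $\boldsymbol{r}$ carries an extra term $\alpha_s B\hat{\boldsymbol{u}}_B$ coupling near-boundary rows. In that case I would bound its contribution to $h^2\boldsymbol{e}^T\boldsymbol{r}$ by a discrete trace-type inequality, estimating it by $|\boldsymbol{e}|_{1,h}\,h^{-1/2}\|\hat{\boldsymbol{u}}_B\|_{0,h}$ and absorbing it with Young's inequality into a multiple of $\mathcal{L}_D$. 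I would try first to follow the paper's formulation \eqref{fd_loss_err} literally, where this coupling is absent, and only treat the trace estimate as the fallback. Establishing the $O(h)$ discrete $H^1$ error of the scheme under mere $H^2$ regularity is otherwise standard and may simply be cited.
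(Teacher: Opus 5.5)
Your proof is correct and follows the paper's argument. The boundary term is controlled by $\mathcal{L}_D$ using $|\mathcal{V}_B|\sim h^{-1}$, the interior error is split through the finite difference solution, and the discrete part is bounded via \eqref{discrete_norm_eq}, Cauchy--Schwarz and the discrete Poincar\'e inequality, with $\alpha_s\sim h^{-2}$ and $|\mathcal{V}_I|\sim h^{-2}$ turning the residual into $h^{-4}\mathcal{L}_f$.

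One caution about your fallback. If the interior stencil also sees $\hat{\boldsymbol{u}}_B$, the coupling term is only bounded by $|\boldsymbol{e}|_{1,h}\|\hat{\boldsymbol{u}}_B\|_2$, so Young's inequality gives $\|\hat{\boldsymbol{u}}_B\|_2^2=|\mathcal{V}_B|\mathcal{L}_D\sim h^{-1}\mathcal{L}_D$ rather than $C_3\mathcal{L}_D$. This is sharp, for example for sign-alternating boundary values with $\mathcal{L}_f=0$, so the stated estimate genuinely depends on analyzing the eliminated form \eqref{fd_loss_err}, as you and the paper do.
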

\begin{proof}
Clearly, $\boldsymbol{u}_B = \boldsymbol{0}$. 
Let $\boldsymbol{u}^*_h$ be the finite difference solution, satisfying
$$\boldsymbol{u}^*_h = \left[\begin{array}{c} \boldsymbol{u}^h_I \\ \boldsymbol{0}\end{array}\right] \in \mathbb{R}^{|\mathcal{V}_I|+|\mathcal{V}_B|} \qquad \textrm{and}\qquad A_I \boldsymbol{u}^h_I = \boldsymbol{f}_I.$$
By the triangle inequality, the discrete norm equivalence \eqref{discrete_norm_eq}, and the standard finite difference error estimates, we have
\begin{align*}
     |\boldsymbol{u}_I - \hat{\boldsymbol{u}}_I|_{1,h}^2 &\le 2 |\boldsymbol{u}_I - \boldsymbol{u}^h_I|_{1,h}^2 + 2 |\boldsymbol{u}^h_I - \hat{\boldsymbol{u}}_I|_{1,h}^2 \le C h^2\|u\|^2_{H^2(\Omega)} + 2 |\boldsymbol{u}^h_I - \hat{\boldsymbol{u}}_I|_{1,h}^2.
\end{align*}
Cauchy-Schwarz inequality and the discrete Poincar\'e inequality imply
\begin{align*}
    |\boldsymbol{u}^h_I - \hat{\boldsymbol{u}}_I|_{1,h}^2 &\le c_I^{-1} h^2 (\boldsymbol{u}^h_I - \hat{\boldsymbol{u}}_I)^T A_I (\boldsymbol{u}^h_I - \hat{\boldsymbol{u}}_I) \\
    &\le c_I^{-1} h \|\boldsymbol{u}^h_I - \hat{\boldsymbol{u}}_I\|_{0,h} \|\boldsymbol{f}_I - A_I\hat{\boldsymbol{u}}_I\|_2 \\
    &\le \alpha_s c_I^{-1} C_P h |\boldsymbol{u}^h_I - \hat{\boldsymbol{u}}_I|_{1,h} \left\|\widetilde{A}_I\hat{\boldsymbol{u}}_I -\frac{1}{\alpha_s}\boldsymbol{f}_I\right\|_2.
\end{align*}
Since $\alpha_s \sim h^{-2}$ , we have
$$|\boldsymbol{u}^h_I - \hat{\boldsymbol{u}}_I|_{1,h} \le C h^{-1} \left\|\widetilde{A}_I\hat{\boldsymbol{u}}_I -\frac{1}{\alpha_s}\boldsymbol{f}_I\right\|_2.$$
Combining the above results with $|\mathcal{V}_I| \sim h^{-2}$ yields
\begin{equation}\label{fd_err_1}
    |\boldsymbol{u}_I - \hat{\boldsymbol{u}}_I|_{1,h}^2 \le C h^2\|u\|^2_{H^2(\Omega)} + 2 C h^{-2}\left\|\widetilde{A}_I\hat{\boldsymbol{u}}_I -\frac{1}{\alpha_s}\boldsymbol{f}_I\right\|_2^2 \le C h^2\|u\|^2_{H^2(\Omega)} + 2 C h^{-4} \mathcal{L}_f.
\end{equation}
By \eqref{discrete_l2} and the fact that $|\mathcal{V}_B| \sim h^{-1}$, we have
\begin{equation}\label{fd_err_2}
    h^{-1}\|\boldsymbol{u}_B - \hat{\boldsymbol{u}}_B\|^2_{0,h} \le C  \frac{1}{|\mathcal{V}_B|} \|\hat{\boldsymbol{u}}_B\|^2_2 = C \mathcal{L}_D.
\end{equation}
Combining \eqref{fd_err_1} and \eqref{fd_err_2} concludes the proof.
\end{proof}

\begin{corollary} \label{cor_train_fd}
Let $u^k\in H^2(\Omega)\cap H^1_D(\Omega)$ be the weak solution, and let $\hat{\boldsymbol{u}}^k$ be the FD-CON prediction corresponding to $f^k$in the homogeneous Dirichlet boundary setting considered above. 
Then
\begin{equation}
    \frac{1}{N_s} \sum_{k=1}^{N_s} \left(|\boldsymbol{u}^k_I - \hat{\boldsymbol{u}}_I^k|_{1,h}^2 + h^{-1
}\|\boldsymbol{u}_B^k - \hat{\boldsymbol{u}}_B^k\|^2_{0,h}\right) \le C_1 h^2 \frac{1}{N_s} \sum_{k=1}^{N_s} \|u^k\|_{H^2(\Omega)}^2 + C h^{-4} \mathcal{L}_{\text{FDM}}.
\end{equation}
\end{corollary}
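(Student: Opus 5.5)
The corollary should follow from the preceding FD-CON theorem, applied sample by sample and then averaged. For each $k$, the theorem applied to the input $f^k$ and prediction $\hat{\boldsymbol{u}}^k$ gives
\begin{equation*}
|\boldsymbol{u}^k_I - \hat{\boldsymbol{u}}^k_I|_{1,h}^2 + h^{-1}\|\boldsymbol{u}^k_B - \hat{\boldsymbol{u}}^k_B\|^2_{0,h} \le C_1 h^2 \|u^k\|^2_{H^2(\Omega)} + C_2 h^{-4}\mathcal{L}_f^k + C_3 \mathcal{L}_D^k .
\end{equation*}
Here $\mathcal{L}_f^k$ and $\mathcal{L}_D^k$ are the per-sample losses \eqref{eq:interior_loss} and \eqref{eq:dirichlet_loss}, rewritten in the matrix form \eqref{fd_loss_err}.

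The constants $C_1,C_2,C_3$ do not depend on $h$. They also do not depend on the data $f^k$: the proof only uses the FD a priori estimate, norm equivalence \eqref{discrete_norm_eq}, the discrete Poincaré inequality, and the scalings $\alpha_s\sim h^{-2}$, $|\mathcal{V}_I|\sim h^{-2}$, $|\mathcal{V}_B|\sim h^{-1}$. So the same constants serve every sample, and summing over $k=1,\dots,N_s$ and dividing by $N_s$ gives the left-hand side of the corollary bounded by
\begin{equation*}
C_1 h^2 \frac{1}{N_s}\sum_{k}\|u^k\|^2_{H^2(\Omega)} + \frac{1}{N_s}\sum_k\left(C_2 h^{-4}\mathcal{L}_f^k + C_3\mathcal{L}_D^k\right).
\end{equation*}

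Next, absorb the second sum into $C h^{-4}\mathcal{L}_{\text{FDM}}$. For $h\le 1$ we have $C_3 \le C_3 h^{-4}$, so each summand is at most
\begin{equation*}
h^{-4}\max\left(\frac{C_2}{\lambda_f},\frac{C_3}{\lambda_D}\right)\left(\lambda_f\mathcal{L}_f^k + \lambda_D\mathcal{L}_D^k\right).
\end{equation*}
Since $\lambda_N\mathcal{L}_N^k\ge 0$, adding it only increases the right-hand side, so averaging yields $C h^{-4}\mathcal{L}_{\text{FDM}}$ by \eqref{fd_loss}, with $C$ depending on the fixed positive weights $\lambda_f,\lambda_D$ but not on $h$.

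The only delicate point is this last absorption. The Dirichlet term carries no negative power of $h$, so it must be bounded crudely using $h\le1$ (i.e.\ $N\ge2$). One must also make sure the loss weights are treated as fixed positive hyperparameters independent of $h$; otherwise the constant $C$ would inherit their $h$-dependence. Everything else is linearity of averaging.
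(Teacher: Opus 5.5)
Your proposal is correct and takes essentially the same approach as the paper, which gives no separate proof and leaves implicit that the corollary follows by applying the preceding FD-CON theorem to each sample and averaging over $k$. Your explicit absorption of $C_2h^{-4}\mathcal{L}_f^k + C_3\mathcal{L}_D^k$ into $Ch^{-4}\mathcal{L}_{\text{FDM}}$ uses $h\le 1$, fixed positive weights $\lambda_f,\lambda_D$, and $\lambda_N\mathcal{L}_N^k\ge 0$, and it correctly fills in the one step the paper does not spell out.
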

\begin{remark} \label{rem:loss_scaling_fd}
    Corollary~\ref{cor_train_fd} shows that the averaged error consists of an $O(h^2)$ discretization term and a loss-dependent term scaled by $h^{-4}$.
    Hence, to attain the optimal convergence rate $O(h^2)$, the training loss $\mathcal{L}_{\textrm{FDM}}$ must be bounded by $O(h^6)$.
    For example, halving the mesh size requires reducing $\mathcal{L}_{\textrm{FDM}}$ by a factor of 64.
\end{remark}

\subsection{Numerical experiments}
\begin{figure}
    \centering
    \includegraphics[height=9cm]{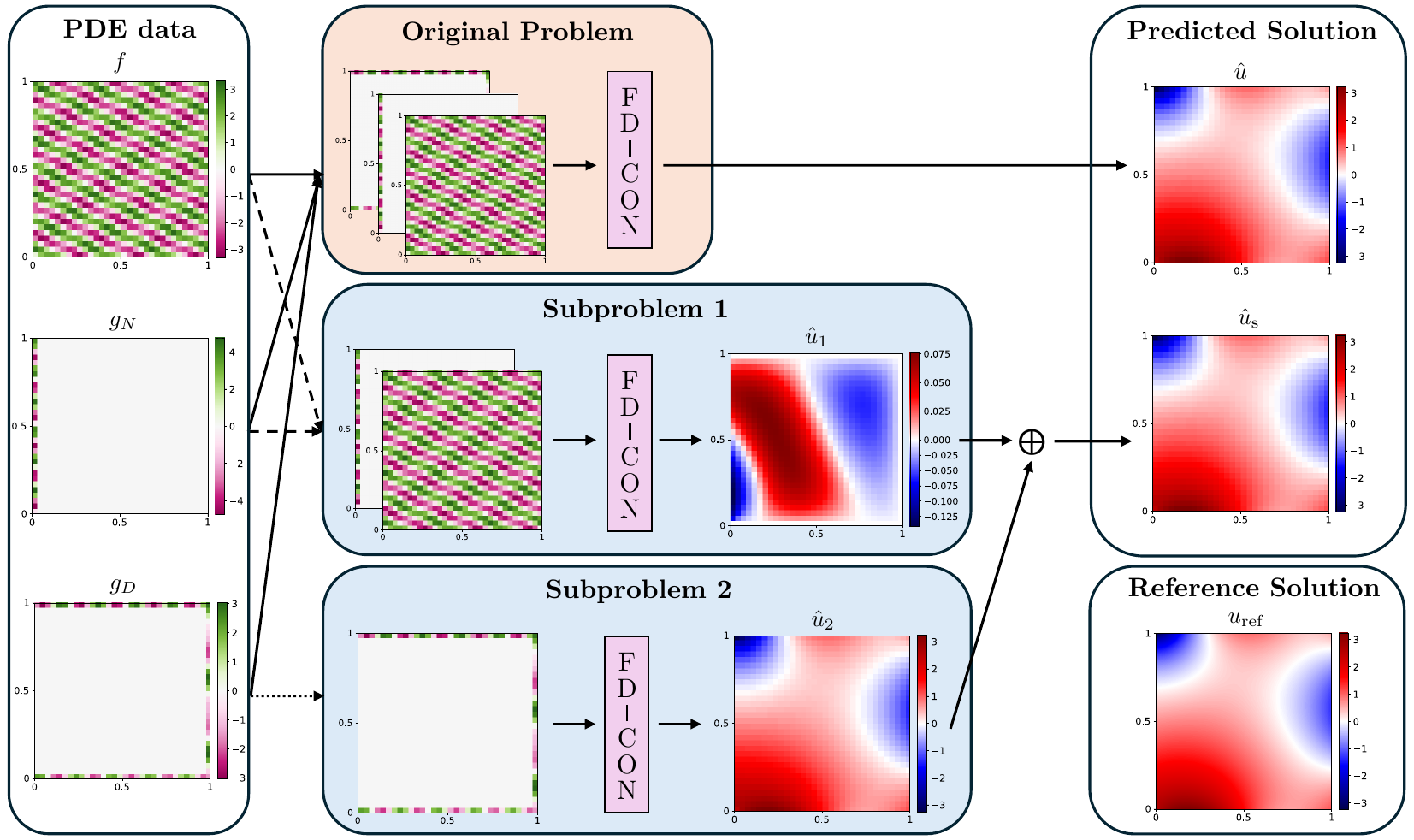}
    \caption{Computational workflow of FD-CON for $N=32$.}
    \label{fig:workflow_FD}
\end{figure}

Figure~\ref{fig:workflow_FD} shows the computational workflow of FD-CON.
The PDE data $f$, $g_N$, and $g_D$ are first transformed by the domain-to-image mapping, so that each input image encodes not only the values of the data but also the geometry of the domain and the boundary information.
For FD-CON, these images are used directly as network inputs.
For the original problem, the model takes a three-channel input that consists of $f$, $g_N$, and $g_D$.
For the decomposed formulation, two separate models are used: the model for Subproblem 1 uses a two-channel input corresponding to $f$ and $g_N$, while the model for Subproblem 2 uses a single-channel input corresponding to $g_D$.
Each model produces a corresponding prediction, $\hat{u}$, $\hat{u}_1$, and $\hat{u}_2$, respectively, and the final prediction is obtained by $\hat{u}_s = \hat{u}_1 + \hat{u}_2$.

Additional numerical results corresponding to Sections~\ref{ex:performance} and \ref{numerical_verification} are provided for FD-CON.
The observed behaviors in Figures~\ref{fig:app1} and \ref{fig:error_analysis_FD} are consistent with those of FE-CON, including convergence under grid refinement and the effect of loss scaling.

\begin{figure}[t!]
    \centering
    \begin{subfigure}{0.49\linewidth}
        \centering
        \includegraphics[height=6cm]{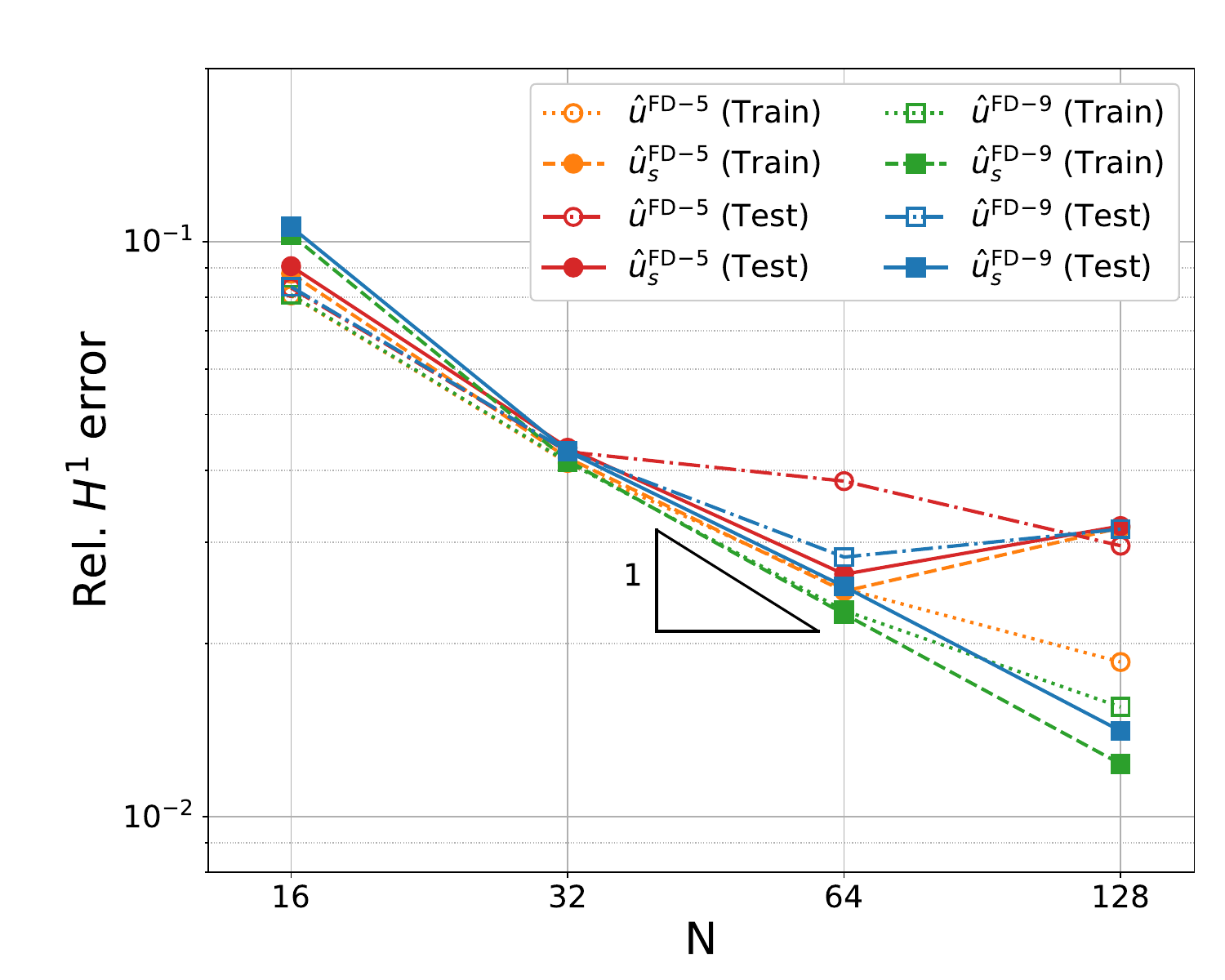}
        \caption{Relative error}
        \label{fig:app1_error_FD}
    \end{subfigure}
    \hfill
    \begin{subfigure}{0.49\linewidth}
        \centering
        \includegraphics[height=6cm]{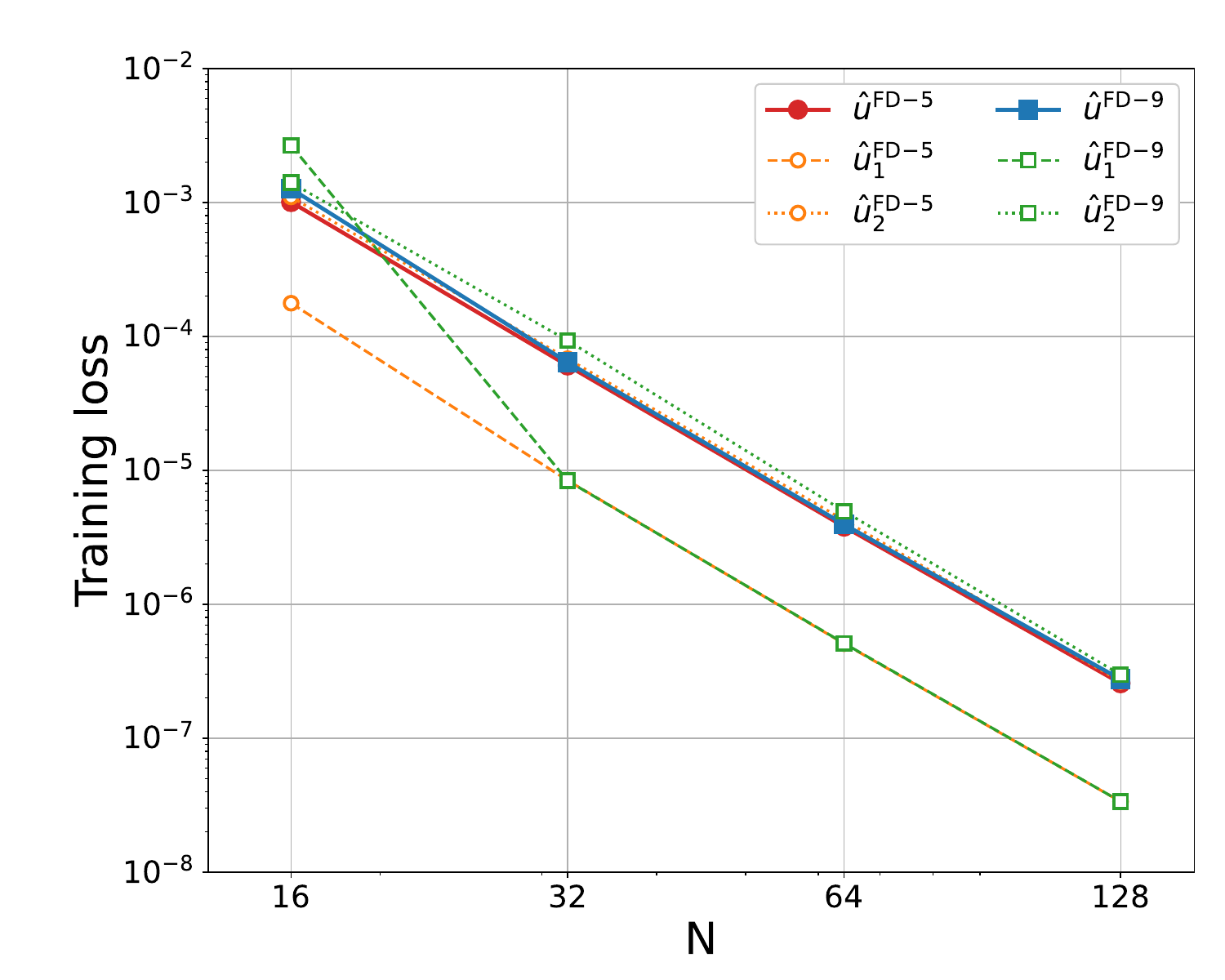}
        \caption{Training loss} 
        \label{fig:app1_loss_FD}
    \end{subfigure}
    \caption{Relative $H^1$ errors and training losses of FD-CON under grid refinement.
    }
    \label{fig:app1}
\end{figure}

\begin{figure}[t!]
    \centering
    \begin{subfigure}{0.49\linewidth}
        \centering
        \includegraphics[height=6cm]{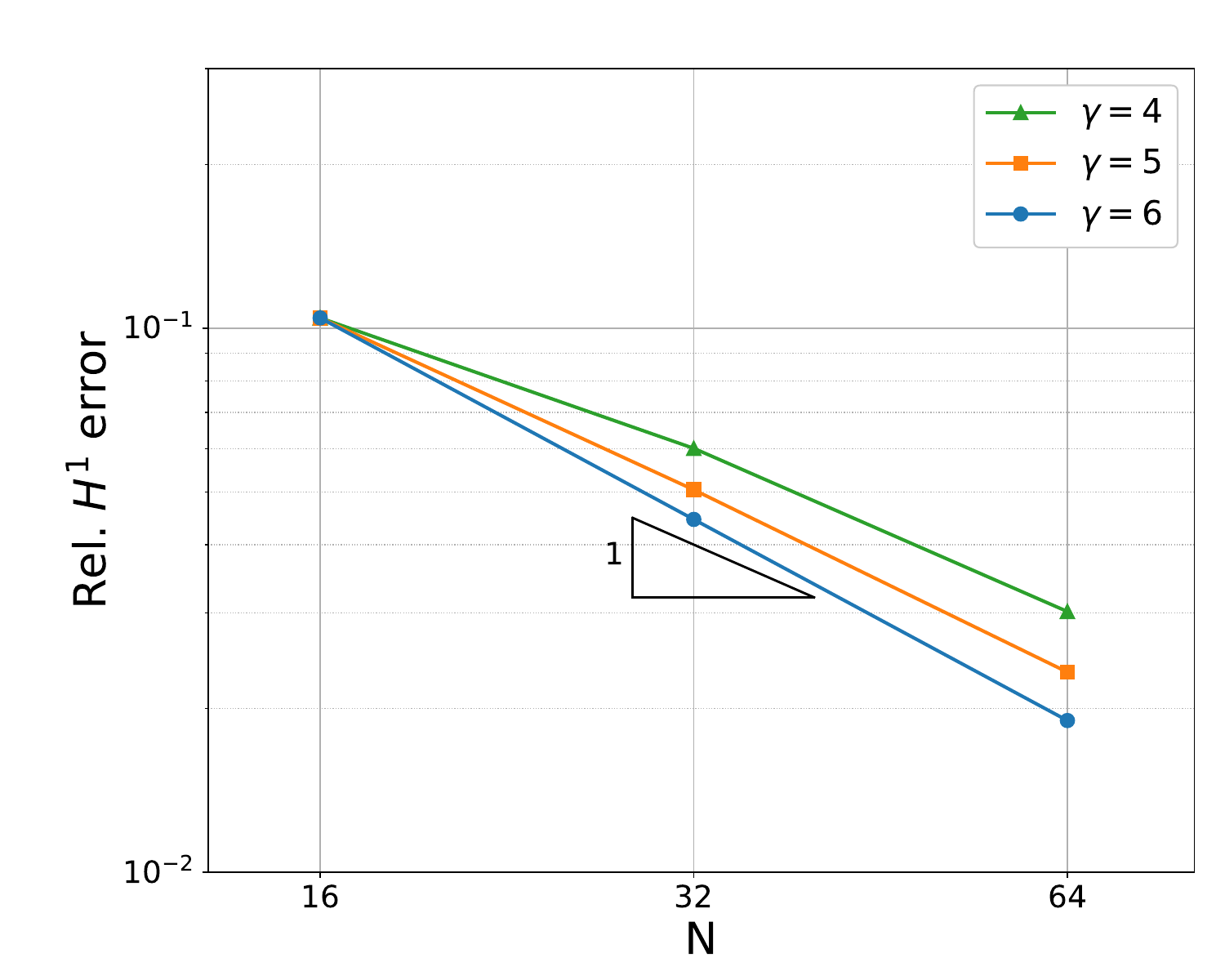}
        \caption{Training error} \label{fig:FD_9_error_analysis_train}
    \end{subfigure}
    \hfill
    \begin{subfigure}{0.49\linewidth}
        \centering
        \includegraphics[height=6cm]{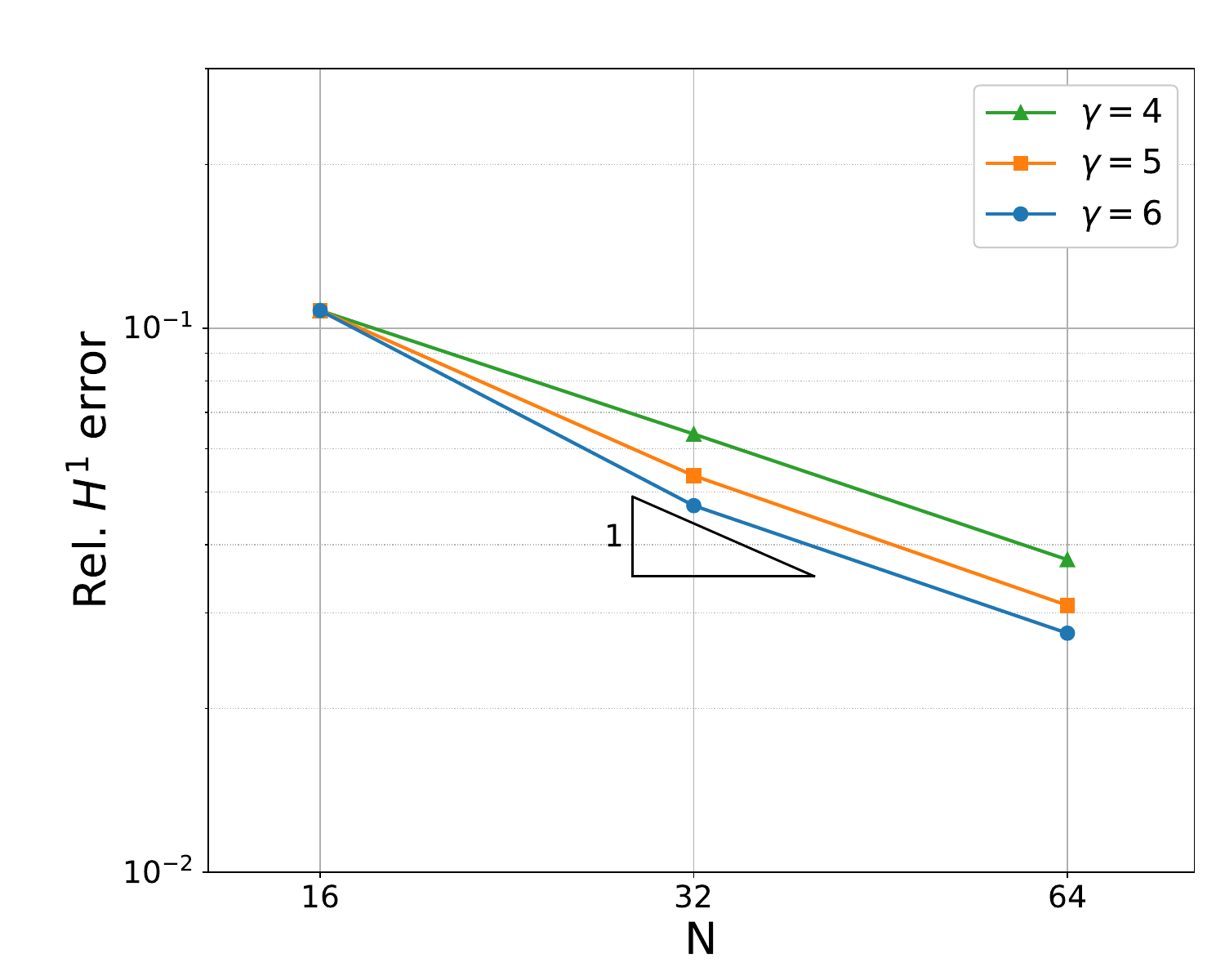}
        \caption{Test error} \label{fig:FD_9_error_analysis_test}
    \end{subfigure}
    \caption{Relative $H^1$ errors of FD-CON (FD-9) under different loss scalings $(O(h^\gamma))$.}
    \label{fig:error_analysis_FD}
\end{figure}

\end{document}